\newtheorem{Thm}{Theorem}[section]
\newtheorem*{ThmA}{Theorem A}
\newtheorem*{ThmB}{Theorem B}
\newtheorem*{ThmC}{Theorem C}
\newtheorem*{ThmD}{Proposition 1}
\newtheorem{Prop}[Thm]{Proposition}
\newtheorem{Lem}[Thm]{Lemma}
\newtheorem{Cor}[Thm]{Corollary}
\theoremstyle{remark}
\theoremstyle{definition}
\newtheorem{Ex}[Thm]{Example}
\newtheorem{Not}[Thm]{Notation}
\newtheorem{Def}[Thm]{Definition}
\newcommand{\ga}{\Gamma}
\newcommand{\eps}{\varepsilon}
\newcommand{\gen}[1]{\langle\mkern3mu#1\mkern3mu\rangle}
\newcommand{\normgen}[1]{{\langle}{\mkern-3.7mu}{\lvert}{\mkern1.2mu}{#1}{\mkern1.2mu}{\rvert}{\mkern-3.7mu}{\rangle}}
\newcommand{\gp}[2]{\gen{{#1}\mid #2}}
\newcommand{\lsup}[2]{{}^{#1}\mkern-1mu{#2}}
\def\coloneq{\mathrel{\mathop\mathchar"303A}\mkern-1.2mu=}
\def\Z{\mathbb{Z}}
\def\coloneqq{\mathrel{\mathop\mathchar"303A}\mkern-1.2mu=}
\DeclareMathOperator{\Link}{Lk}
\title{One relator quotients of Graph Products}
\author{Yago Antol\'{i}n and Aditi Kar}
\address{School of Mathematics, University of Southampton, Highfield, Southampton, SO17 1BJ, England.}
\address{Mathematical Institute, University of Oxford, 24-29 St Giles',.}
\email{yago.anpi@gmail.com, Aditi.Kar@maths.ox.ac.uk}
\thanks{The first author is supported by MCI (Spain) through project MTM2011-25955 and EPSRC through project EP/H032428/1. The second author is supported by EPSRC post-doctoral fellowship EP/I020276/1.}
\begin{document}

\begin{abstract} In this paper, we generalise Magnus' Freiheitssatz and solution to the word problem for one-relator groups by considering one relator quotients of  certain classes of right-angled Artin groups and graph products of locally indicable polycyclic groups.   
\end{abstract} 

\keywords{Graph products, word-problem, Freiheitssatz, one-relator quotient, locally indicable groups,  right angled Artin groups, graph groups.}

\subjclass[2010]{Primary 20F10,  20F05.}\maketitle

\section{Introduction}\label{sec:intro}

The word problem for a finitely presented group is the decidability problem of determining whether two words represent the same element. It was Max Dehn who proposed the three fundamental problems of infinite group theory, the word problem, the conjugacy problem and the isomorphism problem and proved that the word and conjugacy problems are solvable in the fundamental groups of closed surfaces of genus $\geq 2$. A remarkable theorem of Magnus' generalises Dehn's result to establish that the word problem is solvable for any group that may be defined via a presentation involving only one relator. In this paper, we generalise Magnus' solution to the word problem for one-relator groups by considering one relator quotients of right-angled Artin groups and graph products of locally indicable polycyclic groups.   

Let $\ga=(V,E)$ be a graph with vertex set $V$ and edge set $E$. The \textit{right-angled Artin group} associated to the graph $\ga$ is defined by $$A_\ga:= \langle V\  |\  [u,v]=1 \ \forall\  (u,v) \in E \rangle.$$

\noindent In the literature, right-angled Artin groups are often also called graph groups or partially commutative groups. If $\ga$ is the complete graph on $V$, then $A_\ga \cong \mathbb{Z}^n$ and the rank $n$ is equal to the cardinality $|V|$ of $V$. In the other extreme, we have the non-abelian free group of rank $|V|$, which is the right-angled Artin group associated to the totally disconnected graph with $|V|$ vertices. For a subset $U\subset V$, we  denote by $\ga_U$ the subgraph of $\ga$ spanned by $U$. Recall that the inclusion $\ga_U\to \ga$ induces an natural injective map $A_{\ga_U}\to A_\ga$. In this paper, we consider one-relator quotients of a special class of right-angled Artin groups. 

\begin{Def}\label{Def:star} A graph $L$ is called {\it starred } if it is finite and has no incidence of full subgraphs isomorphic to either $C_4$, the cycle of length 4, or $L_3$, the line of length three. A right-angled Artin group is said to be starred if its defining graph is starred. 
\end{Def}

\noindent For us, the important feature of connected starred graphs is the following: Droms showed (see \cite[Lemma]{Droms}) that every such graph has a {\it node}: a node is a vertex which is adjacent to every other vertex. 
For a starred graph we define $N=N(\ga)=\{v\in V\ga : v \text{ is a node of } \ga\}$, the set of nodal vertices. In particular $\ga_{V-N}$ is disconnected. A subset $U$ of $V$ {\it spans a sub-star} of $\ga$ if $\ga_U$ is spanned by $N$ and some connected components of $\ga_{V-N}$.

For a group $G$ and $g\in G$, we write $G/\normgen{g}$ to denote the quotient of $G$ by the normal closure of $g$ in $G$. Our main interest lies in one relator quotients of right-angled Artin groups and in this spirit we obtain the theorem below.

\begin{ThmA} Let $A_\Gamma$ be a starred right-angled Artin group and let $g$ be an element of $A_\Gamma$. Let $N$ be the set of nodal vertices of $V$ and $G\coloneqq A_\Gamma/{\normgen{g}}$. Then
\begin{enumerate}
\item[(i)] the word problem is solvable in $G$;
\item[(ii)] if $U\subset N$ and $g\not\in A_{\ga_U}$, then $A_{\ga_U}$ naturally embeds in $G$;
\item[(iii)] if $U$ spans a sub-star, and $g$ is not conjugate to an element of $A_{\ga_U}$ then $A_{\ga_U}$ naturally embeds in $G$.
\end{enumerate}
\end{ThmA} 

Right-angled Artin groups are  a special case of graph products, a more general construction. Let $\ga$ be a simplicial graph and suppose that $\mathfrak{G}=\{G_v \mid v\in V\ga\}$ is a collection of groups (called \textit{vertex groups}). The \emph{graph product} $\ga \mathfrak{G}$, of this collection of groups with respect to $\ga$, is the group obtained from the free product of the $G_v$, $v \in V\ga$, by adding the relations $$[g_v, g_u]=1  \text{ for all }  g_v\in G_v,\, g_u\in G_u \text{ such that $\{u,v\}$ is an edge of } \ga.$$

Given a graph product $G\coloneqq \ga \mathfrak{G}$ and a full subgraph $\Lambda$ of $\ga$, we denote by $G_\Lambda$ the graph product $\Lambda \{G_v\mid v\in V\Lambda\}$. It is known that the natural map $G_\Lambda\to G$ induced by the inclusion of $\Lambda$ in $\ga$ is injective.

The graph product of groups is a natural group-theoretic construction generalizing free products (when $\ga$ has no edges) and direct products (when $\ga$ is a complete graph) of groups $G_v$, $v \in V\ga$. Graph products were first introduced and studied by E. Green in her Ph.D. thesis \cite{Green}. Green considered one-relator quotients of right-angled Artin groups and asked if there exists some form of a \emph{Freiheitssatz} for these groups. Theorem A (iii) gives a Freiheitssatz for this family of groups. Green did not consider the word problem for one relator right-angled Artin groups.

Theorem A(i) is a special case of Theorem B below, in which we give a solution to the word problem for one relator quotients of graph products of poly-(infinite cyclic) groups. Recall that a group is \emph{poly-(infinite cyclic)} if there is a finite length subnormal series in which  all the factor groups are infinite cyclic. Clearly, poly-(infinite cyclic) groups are locally indicable. Recall that a group $G$ is {\it locally indicable} if every non-trivial finitely generated subgroup of $G$ has an infinite cyclic quotient.

\begin{ThmB} Let $\Gamma$ be a starred graph and $\mathfrak{G}=\{G_v\mid v\in V\ga\}$ be a family of poly-(infinite cyclic) groups.  Let $g\in G\coloneqq \ga \mathfrak{G}$. Then, the word problem  of the  one-relator quotient $G/{\normgen{g}}$ is solvable.
\end{ThmB}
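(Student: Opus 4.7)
The plan is to prove Theorem B by induction on the lexicographic triple $c(\Gamma,\mathfrak{G},g) = (|V\Gamma|,\, \sum_{v} h(G_v),\, \ell(g))$, where $h(G_v)$ is the Hirsch length of $G_v$ and $\ell(g)$ is the syllable length of $g$ in a graph-product normal form. Two background facts will be used freely: any graph product of poly-$\mathbb{Z}$ groups has solvable word problem (via the standard normal form for graph products), and any graph product of poly-$\mathbb{Z}$ groups over a starred graph is locally indicable (proved by induction using the node structure, together with the fact that direct products and free products of locally indicable groups are again locally indicable). After cyclic reduction and, if necessary, an application of a Theorem A(iii)-style embedding to reduce to a smaller starred sub-graph-product, we may assume $g\ne 1$ is cyclically reduced and not conjugate into any proper $G_\Lambda$. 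The base case $|V\Gamma|=1$ is immediate: $G$ is poly-$\mathbb{Z}$, so $G/\normgen{g}$ is polycyclic and therefore has solvable word problem.

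If $\Gamma$ is disconnected with connected components $\Gamma_1,\dots,\Gamma_k$, then $G \cong G_{\Gamma_1} \ast \cdots \ast G_{\Gamma_k}$ is a free product of groups that, by the inductive hypothesis, are locally indicable with solvable word problem. Howie's solution of the word problem for one-relator products of locally indicable groups, together with its underlying Magnus rewriting machinery, then solves the word problem in $G/\normgen{g}$.

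If $\Gamma$ is connected with $|V\Gamma|\ge 2$, Droms' Lemma supplies a node $v$, and so $G = G_v \times H$ where $H = G_{\Gamma-v}$. Since $G_v$ is poly-$\mathbb{Z}$, there is a surjection $\phi\colon G_v \twoheadrightarrow \mathbb{Z}$ with kernel $K$; fix $z\in G_v$ lifting a generator, so that $G_v = K \rtimes \langle z \rangle$ and $G = (K\times H)\rtimes_\sigma \langle z \rangle$, where $\sigma$ is induced by conjugation by $z$ on $K$ and is trivial on $H$. Extending $\phi$ trivially on $H$ gives $\Phi\colon G \to \mathbb{Z}$. If $\Phi(g)\ne 0$, a Magnus-style change of variables replaces $z$ by an element with zero $\Phi$-exponent sum, reducing us to the case $\Phi(g)=0$. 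In that case $\normgen{g}$ is contained in $K\times H$, and $G/\normgen{g}$ is an HNN extension with stable letter $z$ over a base $B$ obtained from $K\times H$ by killing the $\sigma$-translates of $g$; the Magnus rewrite packages these translates as a single relator on an enlarged generating set, exhibiting $B$ as a one-relator quotient of a graph product of poly-$\mathbb{Z}$ groups over a starred graph whose complexity $c$ has strictly decreased (the Hirsch length of the $G_v$-factor drops by one, or $\ell$ strictly decreases). Induction solves the word problem in $B$, and Britton's Lemma lifts this to a solution in the HNN extension.

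The main obstacle is verifying that the Magnus rewriting in the connected case yields a base $B$ that really is a one-relator quotient of a graph product of poly-$\mathbb{Z}$ groups over a \emph{starred} graph of strictly smaller complexity, with a relator that is again cyclically reduced and fills its supporting graph. Because $G_v$ is poly-$\mathbb{Z}$ rather than simply infinite cyclic, we are forced to peel off one infinite cyclic quotient of $G_v$ at a time; keeping track of the starred condition on the enlarged graph and of the inductive hypotheses on the new relator through each peel is the most delicate step, and any serious technical difficulty in the argument will be concentrated here.
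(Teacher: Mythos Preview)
Your disconnected case is close in spirit to the paper's, but the appeal to ``Howie's solution of the word problem for one-relator products of locally indicable groups'' is not a result available in the generality you need. Local indicability together with solvable word problem in the factors is not known to suffice for an effective Magnus procedure; one must be able to \emph{compute} the epimorphisms to $\mathbb{Z}$ on every finitely generated subgroup. This is exactly why the paper introduces \emph{algorithmic} local indicability (Definition~\ref{ALI}), proves in Proposition~\ref{prop:chordal} that chordal graph products of poly-(infinite cyclic) groups enjoy it, and then invokes the $C=1$ case of Theorem~C(ii). So this step is salvageable, but not by the citation you give.

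The connected case contains a genuine gap, and it is the one you yourself flagged. Your claim that ``the Magnus rewrite packages these translates as a single relator on an enlarged generating set, exhibiting $B$ as a one-relator quotient of a graph product of poly-$\mathbb{Z}$ groups over a starred graph'' does not hold. The classical Magnus rewrite works because the kernel of the exponent-sum map on a free group is a \emph{free product} of shifted copies of the remaining generators, and conjugation by the stable letter is a shift between those copies; the single relator then sits in a finite sub-free-product, and the HNN base really is a one-relator group on an enlarged alphabet. In your setup the kernel of $\Phi$ is simply $K\times H$, one fixed copy, and $\sigma$ is an automorphism of $K$ rather than a shift. There is no enlarged alphabet to pass to, and the base $B=(K\times H)/\normgen{\sigma^i(g):i\in\mathbb{Z}}$ is in general an infinitely-related quotient, not a one-relator quotient of any graph product over a starred graph. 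Already when $G_v$ is the Klein bottle group $\gp{a,b}{bab^{-1}=a^{-1}}$ and $g=(a,h)$ one sees the extra relations accumulate. The complexity drop you are counting on does not occur.

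The paper sidesteps this entirely with a short structural observation (Lemma~\ref{Lem:exten} and Proposition~1): for any groups $A,B$ and elements $a\in A$, $b\in B$, conjugating $(a,b)$ by $(x,1)$ and by $(1,y)$ shows that $([A,a],1)$ and $(1,[B,b])$ lie in $\normgen{(a,b)}$, so the kernel of the natural map
\[
\frac{A\times B}{\normgen{(a,b)}}\;\longrightarrow\;\frac{A}{\normgen{a}}\times\frac{B}{\normgen{b}}
\]
is a quotient of a rank-$2$ abelian group. Hence if the word problem is solvable in $A$, $B$, $A/\normgen{a}$ and $B/\normgen{b}$, it is solvable in $(A\times B)/\normgen{(a,b)}$. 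In the connected case $G=G_v\times G_{V-\{v\}}$, straight induction on $|V\Gamma|$ (together with Lemma~\ref{lem:polyfg} for the single-vertex base) gives solvability for every one-relator quotient of each factor, and Proposition~1 finishes the argument immediately --- no Hirsch-length peeling, no HNN machinery, and no tracking of starred structure on an enlarged graph is required.
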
 

Statements (ii) and (iii) of Theorem A do not generalise to graph products of poly-(infinite cyclic) groups and to illustrate this,  an example is given in section \ref{sec:ThmB}.

In Theorem C below, we establish a \emph{Freiheitssatz} for one relator quotients of groups of the form $(A\times C)*_C(B\times C)$, where $A$ and $B$ are locally indicable. Theorem C plays a crucial role in the proof of Theorem A. Algorithmically locally indicable groups are defined in Section \ref{sec:ali}.
\begin{ThmC}
Let $A, B$ and $C$ be  groups and  $G\coloneqq (A\times C)*_C(B\times C)$. Let $w\in G$ and suppose that it is not conjugate to an element of $A\times C$ nor of $B\times C$. 
Then the following hold.
\begin{enumerate}
\item[{\rm(i)}](Freiheitssatz) If $A$ and $B$ are locally indicable, then the natural map $(A\times C)\to G/\normgen{w}$ is injective.
\item[{\rm(ii)}](Membership problem) If moreover  $A$ and $B$ are algorithmically locally indicable;
then the membership problem for $A\times C$ is solvable in the group $G/\normgen{w}.$ 
\end{enumerate}
\end{ThmC}

When $C$ is trivial, Theorem C(i)  is the local-indicability Freiheitssatz (independently proved by Brodskii \cite{Brodskii84} and Howie \cite{Howie81}) and Theorem C(ii) is a strengthening of a result of Mazurovskii \cite{Mazurovskii}. It is worth remarking here that the Freiheitssatz fails in general for direct products;
for example, in the one-relator quotient of $G=\gen{a,b\mid\;}\times \gen{c,d\mid \;}/\normgen{ac^{-1}}$, the natural map $\gen{a,b\mid \:}\to G$ is not injective.
Since $(A\times C)*_C (B \times C)$ $ \cong (A*B) \times C$, Theorem C(i) also provides a Freiheitssatz for direct products as follows. 

\begin{Cor}
Let $A$ and $B$ be two locally indicable groups and $C$ any group. If  $g\in (A*B)\times C$ is not conjugate to an element of $A\times C$ or $B\times C$ then $C$ naturally embeds in the one-relator quotient  $((A*B)\times C)/\normgen{g}$.
\end{Cor}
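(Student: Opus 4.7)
The plan is to deduce the corollary directly from Theorem C(i) via the natural identification $(A*B)\times C \cong (A\times C)*_C(B\times C)$ noted in the preceding paragraph of the text. Under this isomorphism, the amalgam factor $A\times C$ on the left-hand side is mapped to the subgroup $A\times C$ of $(A*B)\times C$ (and similarly for $B\times C$), so conjugacy in one group matches conjugacy in the other. Consequently the hypothesis that $g\in (A*B)\times C$ is not conjugate to any element of $A\times C$ or $B\times C$ is precisely the hypothesis of Theorem C(i) applied to $g$ regarded as an element of $(A\times C)*_C(B\times C)$.

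Since $A$ and $B$ are locally indicable, Theorem C(i) then provides an injection $A\times C \hookrightarrow ((A*B)\times C)/\normgen{g}$. I would finish by composing this with the standard inclusion $C \hookrightarrow A\times C$, $c\mapsto (1,c)$, to produce the claimed natural embedding of $C$ into the one-relator quotient.

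As the argument is essentially a single invocation of Theorem C(i), there is no serious obstacle to overcome. The only point requiring care is verifying that the isomorphism $(A\times C)*_C(B\times C) \cong (A*B)\times C$ sends the two vertex subgroups of the amalgam onto the subgroups of the same name in $(A*B)\times C$, which follows immediately by comparing the universal properties of the two constructions (the map from left to right is induced by the inclusions of $A\times C$ and $B\times C$ into $(A*B)\times C$, which agree on the amalgamated subgroup $C$, and the inverse is built from the obvious generator correspondence).
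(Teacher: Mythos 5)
Your proposal is correct and is exactly the argument the paper intends: the corollary is stated as an immediate consequence of Theorem C(i) via the isomorphism $(A\times C)*_C(B\times C)\cong (A*B)\times C$, and restricting the resulting embedding $A\times C\hookrightarrow ((A*B)\times C)/\normgen{g}$ to the subgroup $C$ gives the claim.
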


\noindent The word problem for one relator quotients of direct product of groups is much simpler.
\begin{ThmD} Let $A$ and $B$ be two recursively presented groups and $a\in A$ and $b\in B$ such that the word problem is solvable in $A$, $A/\normgen{a}$, $B$ and $B/\normgen{b}$. Then the word problem is solvable in  $G=(A \times B)/\normgen{(a,b)}$. 
\end{ThmD} 

Proposition 1 greatly enlarges the class of one relator graph products with solvable word problem and combined with Theorem B, Proposition 1 provides the Corollary below. 
\begin{Cor}
Let $A$ and $B$ be two starred graph products of poly-(infinite cyclic) groups. Then, for all $g\in A\times B$, $(A\times B)/\normgen{g}$ has solvable word problem.
\end{Cor}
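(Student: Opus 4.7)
The plan is to combine Proposition 1 with Theorem B. Given $g\in A\times B$, decompose it uniquely as $g=(a,b)$ with $a\in A$ and $b\in B$; then the normal closure in $A\times B$ satisfies $\normgen{g}=\normgen{(a,b)}$, so $(A\times B)/\normgen{g}=(A\times B)/\normgen{(a,b)}$, which is exactly the form of quotient handled by Proposition 1. Our task reduces to verifying the four hypotheses of Proposition 1 for $A$, $B$, $a$, $b$.

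First, since starred graphs are finite and poly-(infinite cyclic) groups are finitely presented, $A$ and $B$ are themselves finitely presented and hence recursively presented. Next, both $A$ and $B$ have solvable word problem: poly-(infinite cyclic) groups are polycyclic and hence have solvable word problem, and by Droms' lemma a connected starred graph has a node $v$, so the corresponding graph product decomposes as a direct product $G_v\times G_{\Gamma_{V-\{v\}}}$, while a disconnected starred graph yields a free product of the graph products over its connected components. Since solvability of the word problem is preserved under direct products and free products, an easy induction on $|V\Gamma|$ shows that $A$ and $B$ have solvable word problem.

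Finally, $A$ and $B$ are themselves starred graph products of poly-(infinite cyclic) groups, so Theorem B applies directly to the elements $a\in A$ and $b\in B$, yielding that $A/\normgen{a}$ and $B/\normgen{b}$ have solvable word problem. All four hypotheses of Proposition 1 are now satisfied, so $(A\times B)/\normgen{(a,b)}=(A\times B)/\normgen{g}$ has solvable word problem.

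Since the corollary is essentially a packaging of Theorem B together with Proposition 1, there is no genuine obstacle in its proof; the only work is the verification that the ambient groups $A$ and $B$ satisfy the auxiliary hypotheses of Proposition 1 (finite presentability and solvability of word problem), both of which are routine for starred graph products of polycyclic groups. All the substantive content resides in Theorem B and Proposition 1 themselves.
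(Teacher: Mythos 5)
Your proposal is correct and is essentially the paper's intended argument: the authors state the corollary as an immediate consequence of combining Proposition 1 with Theorem B, which is exactly what you do, and your verification of the auxiliary hypotheses (recursive presentability and solvable word problem for $A$ and $B$) is routine and sound. The only cosmetic shortcut you could take is to note that solvability of the word problem in $A$ and $B$ already follows from Theorem B applied with the trivial relator.
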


\begin{Not}
Let $x,y$ be elements of a group $G$. We denote the commutator of $x$ and $y$ by  $[x,y]\coloneqq xyx^{-1}y^{-1}$ and $x^y\coloneq y^{-1}xy$. Similarly, for $X,Y\subset G$,  $X^Y\coloneqq \{ x^y :x\in X, y\in Y\}.$
\end{Not}

The paper is organized as follows:
In section \ref{sec:ali} we introduce the concept of algorithmically locally indicable groups and show that chordal graph products of poly-(infinite cyclic) groups have this property. In section \ref{sec:ThmC} we prove Theorem C, in section \ref{sec:DP} we prove Proposition 1 and finally, Theorems A and B  in section  \ref{sec:ThmB}.

\section{Algorithmic Local Indicability}\label{sec:ali}
 
Higman called a group $G$ locally indicable if every non-trivial finitely generated subgroup of $G$ has an infinite cyclic quotient. As we wish to employ local indicability for solving word problems, we require a computable form of Higman's notion. In this section, we define the notion of algorithmic local indicability which plays a crucial role in Theorems A, B and C. We show that a graph product of poly-(infinite cyclic) groups is algorithmically locally indicable.

We say that \emph{the uniform membership problem is solvable in a group $G$} (also known as \emph{generalized word problem}) if there exists an algorithm which, given any finitely generated subgroup $H\leq G$ and an element $g \in G$, decides whether $g$ belongs to $H$ or not. 

\begin{Def}\label{ALI} A group $G$ is said to be algorithmically locally indicable if
\begin{enumerate}
\item the uniform membership problem is solvable in $G$, and 
\item \label{map} there exists a \textit{computable} surjection from every finitely generated subgroup of $G$ to the integers, i.e. there is an algorithm which, given any finite subset $\{w_1,\ldots, w_k\}\subset G$ that generates a non-trivial subgroup $H$ in $G$, produces a list of integers $\{n_1, \ldots, n_k\}$ with the property that mapping each $w_i$ to the corresponding $n_i$ defines an epimorphism of $H$ onto $\mathbb{Z}$. 
\end{enumerate} 
\end{Def}

\subsection*{Examples. } 1.) Abelian and non-abelian free groups are algorithmically locally indicable. The abelian case follows easily from the fundamental structure theorem for finitely generated abelian groups. If $F$ is a non-abelian free group, then we know the uniform membership problem is solvable in $F$ from \cite{LyndonSchupp}. Now let $g_1, \ldots,g_n \in F$ and let $H=\langle g_1, \ldots,g_n \rangle$ be the subgroup that they generate in $F$. Perform Nielsen transformations to obtain a (Nielsen reduced) basis for $H$. Now there are many ways to define an epimorphism from $H$ onto $\mathbb{Z}$!

2.) The right-angled Artin group associated to $C_4$ is a direct product of two non-abelian free groups of rank 2. Mikhailova showed in \cite{Mikhailova} that the uniform membership problem is not solvable in $F_2 \times F_2$. Hence this group is locally indicable but is not algorithmically locally indicable.  
\medskip

A graph $L$ is chordal if it is finite and every cycle of length at least 4 has a chord, i.e. an edge joining two non-adjacent vertices of the cycle. Notice that every starred graph is chordal.

\begin{Prop}\label{prop:chordal} Let $\ga$ be a chordal graph, $\mathfrak{G}=\{G_v\mid v\in V\ga\}$ a family of poly-(infinite cyclic) groups. Then $G\coloneqq\ga \mathfrak{G}$ is algorithmically locally indicable. 
\end{Prop}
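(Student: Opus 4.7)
The plan is to induct on $|V\Gamma|$. For the base case $|V\Gamma|=1$, the group $G=G_v$ is poly-(infinite cyclic), hence polycyclic. By Mal'cev's theorem the uniform membership problem is solvable. Moreover, given generators of a subgroup $H\le G_v$, one can compute a polycyclic presentation of $H$; since $H$ is itself poly-(infinite cyclic), the top quotient of its subnormal series yields an effectively computable surjection $H\twoheadrightarrow\mathbb{Z}$, settling part (2) of Definition \ref{ALI}. The same observations handle any finite direct product of poly-(infinite cyclic) groups, which is again poly-(infinite cyclic).

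For the inductive step with $|V\Gamma|\ge 2$, I would apply Dirac's classical theorem that every chordal graph contains a simplicial vertex $v$, i.e.\ one whose neighbourhood $K:=N_\Gamma(v)$ induces a clique. Put $\Gamma_0:=\Gamma\setminus\{v\}$, which is still chordal. The graph product then splits as the amalgamated free product
\[ G\;\cong\;G_{\Gamma_0}\ast_{G_K}(G_K\times G_v), \]
where $G_K\cong\prod_{w\in K}G_w$ is poly-(infinite cyclic), embeds as a retract in both factors (killing generators outside $K$), and $G_K\times G_v$ is poly-(infinite cyclic). By induction $G_{\Gamma_0}$ is algorithmically locally indicable, and by the extended base case so is $G_K\times G_v$. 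The task thus reduces to showing that algorithmic local indicability is preserved under amalgamation over a polycyclic retract.

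For uniform membership in $A\ast_C B$ under this hypothesis, I would use Bass--Serre theory: a finitely generated subgroup $H$ acts on the Bass--Serre tree of the splitting, producing a finite graph of groups for $H$ whose vertex and edge groups are finitely generated subgroups of conjugates of $A$, $B$ and $C$ respectively. Polycyclicity of $C$ together with the retraction structure makes the intersections $H\cap gCg^{-1}$ finitely generated and computable, so that the folding process terminates effectively; membership in $H$ then reduces to finitely many membership queries in the vertex groups, handled by the inductive hypothesis and base case. For the computable surjection to $\mathbb{Z}$: if $H$ is elliptic on the Bass--Serre tree, it lies in a conjugate of $A$ or $B$ and the algorithm from the inductive step applies; otherwise the quotient graph of groups for $H$ has either a non-trivial loop (yielding a surjection onto $\mathbb{Z}$ from the underlying graph) or a vertex group strictly containing an adjacent edge group (yielding $\mathbb{Z}$ via the vertex group's algorithm composed with the retraction).

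The main obstacle is making the Bass--Serre folding procedure truly algorithmic, since the uniform membership problem for general amalgamated free products is undecidable---Mikhailova's example $F_2\times F_2$ (which is the graph product over $C_4$, legitimately excluded here because $C_4$ is not chordal) shows that some strong hypothesis is essential. What saves the argument is the polycyclicity of the edge group $G_K$: it provides coherent subgroup structure, finite generation of intersections with finitely generated subgroups, and a decidable generalised word problem inside each factor. Verifying carefully that all of this algorithmic control propagates through the induction, so that one can in fact write down the requisite graph of groups for $H$ and read off both membership and a map to $\mathbb{Z}$ from it, is the central technical point I would need to develop in detail.
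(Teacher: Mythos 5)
Your decomposition at a simplicial vertex is essentially the same one the paper uses ($G\cong (G_v\times G_{\Link(v)})*_{G_{\Link(v)}}G_{\Gamma_0}$, with $\Link(v)$ a clique when $v$ is simplicial), but the argument has two genuine gaps, one of which you flag yourself. For the uniform membership problem you defer the whole content of the proof to ``making the Bass--Serre folding procedure truly algorithmic''; that is precisely the hard part, and the paper does not reprove it. Instead it observes that iterating your decomposition expresses $G$ as the fundamental group of a \emph{tree} of groups in which \emph{every} vertex and edge group is polycyclic (since $\Link(v)\cup\{v\}$ is a clique, $G_{\Link(v)}\times G_v$ is polycyclic, and one recurses on the chordal graph $\Gamma_0$), and then invokes Kapovich--Weidmann--Myasnikov, whose folding machinery is designed exactly for graphs of groups with polycyclic-by-finite vertex groups. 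Your version keeps one factor, $G_{\Gamma_0}$, as an arbitrary chordal graph product, so termination of folding and computability of the intersections $H\cap gG_Kg^{-1}$ do not follow from ``polycyclicity of the edge group'' alone; you would have to carry a substantial package of algorithmic hypotheses through the induction, and this is not done.

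The second gap is in the surjection onto $\mathbb{Z}$. In your case of a tree of groups with a vertex group $H_v$ strictly containing an adjacent edge group, a surjection $H_v\twoheadrightarrow\mathbb{Z}$ extends to $H$ only if it kills the incident edge groups, and nothing in your sketch guarantees that. The paper avoids this entirely (Lemma \ref{cond2}): it uses the split exact sequence $1\to\gen{G_v^G}\to G\to G_{\Gamma_0}\to 1$, notes that the kernel is a free product of conjugates of $G_v$ (hence handled by Lemma \ref{lem:free_prod}), and applies the extension lemma (Lemma \ref{lem:extconst}): if $H$ has nontrivial image in $G_{\Gamma_0}$ one composes with the epimorphism supplied by induction, and otherwise $H$ lies in the free-product kernel. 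Note that this part of the argument needs no chordality at all. I would suggest replacing your graph-of-groups case analysis by this retraction/kernel argument, and for membership either citing the Kapovich--Weidmann--Myasnikov theorem or first refining your decomposition until all vertex and edge groups are polycyclic.
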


We remark that chordal graph products of poly-(infinite cyclic) groups is the larger family of graph products for which the uniform membership problem is solvable. In the particular case of starred right-angled Artin groups, this was implicit in the proof of Droms \cite[Theorem]{Droms} when he proves that all subgroups of starred right-angled Artin group are starred.

To prove the proposition above, we need a few lemmas. 

\begin{Def}
A short exact sequence of groups
$1\to K\stackrel{\phi}{\to} G\stackrel{\psi}{\to} H\to 1$
is said to be {\it algorithmic} if:
\begin{enumerate}
\item[(i)] the groups $K$, $G$ and $H$ and the
morphisms $\phi$ and $\psi$ are given to us in an algorithmic way; i.e. we can effectively
operate in $K,$ $G$ and $H$, and compute images under $\phi$ and $\psi$, 
\item[(ii)] we can solve the word problem in $K$ and $H$,
\item[(iii)]  given $g\in G$, such that $\psi(g)=1$ we can find $k\in K$ such that $\phi(k)=g$. 
\end{enumerate}
\end{Def}

\begin{Lem} \label{lem:extconst}
Let $K, H$ be two groups which satisfy property (\ref{map}) of Definition \ref{ALI}. Let $1\to K\stackrel{\phi}{\to} G\stackrel{\psi}{\to} H\to 1$ be an algorithmic short exact sequence. Then $G$ satisfies property (\ref{map}) of Definition \ref{ALI}. 
\end{Lem}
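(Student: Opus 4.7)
My plan is to split into two cases based on whether the image $\psi(S)$ of the candidate subgroup $S = \langle w_1,\dots,w_k\rangle$ in $H$ is trivial or not, using the algorithmic hypotheses to decide which case we are in and to produce the required integers. Since $H$ has solvable word problem (part (ii) of the algorithmic SES), we can test each $\psi(w_i)$ for triviality and thus decide whether $\psi(S)=1$.

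\textbf{Case 1: $\psi(S)\ne 1$.} Then $\psi(S)$ is a non-trivial finitely generated subgroup of $H$ generated by the computable elements $\psi(w_1),\dots,\psi(w_k)$. Applying the algorithm guaranteed by property (\ref{map}) for $H$, I obtain integers $n_1,\dots,n_k$ such that the assignment $\psi(w_i)\mapsto n_i$ extends to an epimorphism $\psi(S)\twoheadrightarrow\mathbb{Z}$. Composing with the restriction $\psi|_S\colon S\twoheadrightarrow \psi(S)$ yields an epimorphism $S\twoheadrightarrow\mathbb{Z}$ that sends $w_i\mapsto n_i$, as desired.

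\textbf{Case 2: $\psi(S)=1$.} Then $S\subseteq\ker\psi=\phi(K)$, and property (iii) of the algorithmic SES lets me effectively compute, for each $i$, an element $k_i\in K$ with $\phi(k_i)=w_i$. Since $\phi$ is injective, the subgroup $K_0\coloneqq\langle k_1,\dots,k_k\rangle\leq K$ maps isomorphically onto $S$ under $\phi$; in particular $K_0$ is non-trivial because $S$ is. Applying the algorithm for $K$ (using property (\ref{map}) for $K$) produces integers $n_1,\dots,n_k$ so that $k_i\mapsto n_i$ defines an epimorphism $K_0\twoheadrightarrow\mathbb{Z}$. Transporting this homomorphism across the isomorphism $\phi\colon K_0\xrightarrow{\cong} S$ gives the required epimorphism $S\twoheadrightarrow\mathbb{Z}$ sending $w_i\mapsto n_i$.

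The only genuine subtlety is verifying that the above steps are effective: deciding the case distinction needs the word problem in $H$, producing preimages in Case 2 is exactly condition (iii) of an algorithmic short exact sequence, and the well-definedness of the resulting map on $S$ in each case is automatic because it factors through either $\psi|_S$ or $\phi^{-1}|_S$. Thus no real obstacle arises; the lemma is a bookkeeping consequence of the hypotheses, and the proof simply records the algorithm assembled from those of $K$, $H$, and the splitting.
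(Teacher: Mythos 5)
Your proof is correct and follows essentially the same route as the paper: test triviality of $\psi(S)$ using the word problem in $H$, pull back an epimorphism from $\psi(S)$ in the non-trivial case, and otherwise lift the generators to $K$ via condition (iii) and transport the epimorphism across the isomorphism $\phi$. The only cosmetic difference is that the paper additionally uses the word problem in $K$ to check that some lifted generator $k_i$ is non-trivial, whereas you correctly observe this is automatic from the promise that $S$ is non-trivial.
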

\begin{proof}
Given $\{g_1,\dots, g_n\}$ in $G$ we can compute its image in $H$ under $\psi$. Since the word problem in $H$ is solvable we can decide whether or not $\gen{\psi(g_1),\dots, \psi(g_n)}$ is the trivial group. If $\{\psi(g_1),\dots, \psi(g_n)\}$ generates a non-trivial subgroup, then the algorithm of $H$ produces a computable epimorphism  from $\gen{\psi(g_1),\dots, \psi(g_n)}$ to $\Z$. Composing with $\psi$ we obtain our computable epimorphism from $\gen{g_1,\dots, g_n}$ to $\Z$.

If $\gen{\psi(g_1),\dots, \psi(g_n)}$ is trivial, then $\psi(g_i)=1$ for each $i=1,\dots, n$ and for each $i$, we can find a unique $k_i\in K$ such that $\phi(k_i)=g_i$. Observe that the injective map $\phi$ restricts to an isomorphism of $\gen{k_1,\dots, k_n}$ with the subgroup of $G$ generated by the $g_i$. Since the word problem in $K$ is solvable, we can decide if all of the $k_i$ are trivial or not. If at least one of the $k_i$'s is non-trivial in $K$, then the algorithm in $K$ produces a computable epimorphism from $\gen{k_1,\dots, k_n}$ to $\Z$ which furnishes us with the required epimorphism. 
\end{proof}

\noindent We will repeatedly use the following lemma in the rest of the paper. 
\begin{Lem} \label{lem:free_prod} The free product of two algorithmically locally indicable groups is algorithmically locally indicable. 
\end{Lem}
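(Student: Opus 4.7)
The plan is to reduce everything to an \emph{effective Kurosh decomposition} of finitely generated subgroups of $G=A*B$. Given $\{g_1,\dots,g_n\}\subset G$ generating a subgroup $H$, I would first apply the Stallings-folding construction for free products (equivalently, Nielsen reduction for free products) to produce a presentation
\[
H \;=\; F_X \,*\, \Bigl(\ast_{i}\, H_i^{a_i}\Bigr) \,*\, \Bigl(\ast_{j}\, K_j^{b_j}\Bigr),
\]
where $F_X$ is free on a finite set $X$, each $H_i\le A$ and each $K_j\le B$ is finitely generated with an explicit generating set, and the conjugators $a_i,b_j\in G$ are given as explicit words. The subroutines of this procedure call only the uniform membership algorithms in $A$ and $B$, which are available because $A$ and $B$ are algorithmically locally indicable; hence the whole computation terminates. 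Solving the uniform membership problem in $G$ is then immediate: given any $g\in G$, put it in $A*B$-normal form and check whether it can be read off the decomposition.

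For the computable-epimorphism clause of Definition~\ref{ALI}, assume $H\ne\{1\}$ and split into two cases. If $X\ne\emptyset$, fix $x_0\in X$ and define $\phi\colon H\to\mathbb{Z}$ by sending $x_0\mapsto 1$ and every other free-product factor (the remaining generators of $F_X$ together with all $H_i^{a_i}$ and $K_j^{b_j}$) to $0$. If instead $X=\emptyset$, then some $H_i$ or $K_j$ is non-trivial; say $H_1$ is. Invoking algorithmic local indicability of $A$ on the generating set of $H_1$ produced in the first step yields a computable epimorphism $\alpha\colon H_1\to\mathbb{Z}$, which I transport to $H_1^{a_1}$ by conjugation and extend to all of $H$ by sending every other free-product factor to $0$. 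In either case, using the explicit expression of each $g_\ell$ in the Kurosh decomposition, I can compute $\phi(g_\ell)\in\mathbb{Z}$, producing the required list of integers.

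The substantive step is the first one, the effective Kurosh decomposition; a Stallings-folding-style procedure for free products outputs exactly what is needed, but one must verify that with solvable uniform membership in $A$ and $B$ as the only black boxes, the procedure terminates with a finite description of the decomposition (in particular, finite generating sets for the vertex-group factors $H_i$ and $K_j$). Once this is in place the second step is essentially bookkeeping: the extension-by-zero on the other free-product factors assembles into a well-defined homomorphism by the universal property of the free product, and surjectivity onto $\mathbb{Z}$ is either automatic (from $x_0\mapsto 1$) or inherited from the output of the algorithmic local indicability of $A$ (or $B$), after dividing the resulting integers by their gcd if necessary.
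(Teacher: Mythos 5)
Your proof is correct, but it takes a genuinely different route from the paper's. You derive both halves of algorithmic local indicability from an \emph{effective Kurosh decomposition} of $H\leq A*B$, computed by a folding procedure: membership is then read off the folded graph, and the epimorphism onto $\mathbb{Z}$ comes either from a free letter of the fundamental group of the graph or, when the free part is trivial, by pushing the problem into a single nontrivial vertex factor $H_i\leq A$ (or $K_j\leq B$) and invoking the hypothesis there. The paper instead quotes Mikhailova only for the membership problem and, for the epimorphism, uses the short exact sequence $1\to K\to A*B\to A\times B\to 1$: the kernel $K$ is free on the commutators $[a,b]$ (Serre), an explicit rewriting identity makes the sequence algorithmic, and the general extension lemma (Lemma~\ref{lem:extconst}) finishes the argument, so the epimorphism is found either already in $A\times B$ or, failing that, in the free kernel. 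Your route yields strictly more information (the actual Kurosh factors of $H$ with explicit finite generating sets) and handles membership and the epimorphism in one computation, but it imports a heavier black box: the termination and correctness of the folding algorithm with only uniform membership in $A$ and $B$ as oracles. That statement is true (it is essentially Mikhailova's 1968 theorem, or the free-product case of Kapovich--Weidmann--Myasnikov, which the paper cites elsewhere), and you correctly flag it as the substantive step; by contrast the paper gets by with an elementary commutator identity. Two minor points: termination of the folding procedure comes from the strict decrease in the number of edges at each fold, not from the termination of the membership subroutines as your phrasing suggests; and the final ``divide by the gcd'' is superfluous, since the images of a generating set of $H$ under an epimorphism onto $\mathbb{Z}$ automatically generate $\mathbb{Z}$.
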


\begin{proof} Let $A$ and $B$ be algorithmically locally indicable groups and let $S$ be a nonempty finite set of words in $A*B$. Mikhailova provides a solution to the uniform membership problem for free products in \cite{Mikhailova68} and so we need to verify property (\ref{map}) of Definition \ref{ALI}. First we write each element of $A*B$ in \emph{normal form}: every element $w\neq 1$ of the free product $A*B$ can be uniquely expressed as a product $w=g_1\ldots g_k$ such that each $g_i \neq 1$, each $g_i$ belongs to one of the factors $A$ or $B$ and successive $g_i$, $g_{i+1}$ are not in the same factor. In particular, we can effectively compute the image of every $g\in G$ in $A\times B$ under the natural quotient map $\pi\colon A*B\to A\times B$. 

Recall that in each factor one may decide if a given word is trivial and then we can decide if a word is trivial in $A\times B$.  Let $K=\ker \pi$ and recall that \cite[Prop. I.1.4]{serre} says that $K$ is free with basis $\{[a,b]^{\pm 1}, a\in A, b\in B\}$. Therefore $K$ is algorithmically locally indicable. Denote by $\phi$ the natural inclusion of $K\to A*B$. 
%

Note that $a_1b_1a_2b_2$, $a_i \in A$, $b_i \in B$ can be rewritten as $a_1a_2b_1b_2[(b_1 b_2)^{-1},a_2^{-1}][a_2^{-1},b_2^{-1}]$ and one can further prove using induction that $a_1b_1\ldots a_kb_k$ is precisely
$$a_1\ldots a_k b_1\ldots b_k\left(\prod_{i=1}^{k-1}{[(b_i\ldots b_{k})^{-1}, (a_{i+1} \ldots a_{k})^{-1}][(a_{i+1}\ldots a_{k})^{-1}, (b_{i+1}\ldots b_{k})^{-1}] } \right)$$


Therefore any element $a_1b_1\dots a_kb_k$ of $A*B$ that lies in the kernel $K$ (i.e. $a_1\dots a_k=1$ and $b_1\dots b_k=1$) may be rewritten as a product of commutators of the form $[a,b]^{\pm 1}, a\in A, b\in B$. That is, for every $g\in A*B$, $\pi(g)=1$, we can compute a $k\in K$ (as a word in the free basis of $K$) such that $\phi(k)=k$. 

In other words, we have proved that $1\to K\stackrel{\phi}{\to}A*B\stackrel{\pi}{\to}A\times B\to 1$ is an algorithmic short exact sequence and now the lemma follows from  Lemma \ref{lem:extconst}  that $A*B$ satisfy the condition (\ref{map}) of Definition \ref{ALI}.

 \end{proof}

\begin{Cor}\label{cor:SP}
 Poly-(infinite cyclic) groups are algorithmically locally indicable. 
\end{Cor}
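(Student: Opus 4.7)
The plan is a straightforward induction on the length $n$ of a subnormal series $1 = G_0 \triangleleft G_1 \triangleleft \cdots \triangleleft G_n = G$ whose factors $G_{i+1}/G_i$ are all infinite cyclic. The base case $n \leq 1$ is already covered by the Examples above ($\mathbb{Z}$ is algorithmically locally indicable). For the inductive step, since the top quotient $G/G_{n-1} \cong \mathbb{Z}$ is free, the projection splits, and so I may write $G \cong K \rtimes \langle t \rangle$ where $K \coloneqq G_{n-1}$ is poly-(infinite cyclic) of shorter length and $t$ maps to a generator of $\mathbb{Z}$. By the inductive hypothesis, $K$ is algorithmically locally indicable.

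The heart of the argument is to exhibit
\[
1 \longrightarrow K \stackrel{\phi}{\longrightarrow} G \stackrel{\psi}{\longrightarrow} \mathbb{Z} \longrightarrow 1
\]
as an algorithmic short exact sequence in the sense of the definition preceding Lemma \ref{lem:extconst}. The word problems in $K$ (by induction) and in $\mathbb{Z}$ are plainly solvable. A polycyclic presentation of $G$ provides a normal form $g = k\, t^m$ for every element, and the standard collection process computes this normal form effectively; hence $\psi$ is computable, and whenever $\psi(g) = 0$ the preimage $k \in K$ with $\phi(k) = g$ is read off directly. Lemma \ref{lem:extconst} then delivers property (\ref{map}) of Definition \ref{ALI} for $G$, completing the inductive step for that part of the definition.

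It remains to verify property (1) of Definition \ref{ALI}, the solvability of the uniform membership problem. For this I would invoke the classical theorem (originally due to Mal'cev, subsequently refined by Kopytov and Romanovski\u{\i}) that polycyclic groups have solvable generalized word problem; in particular so do poly-(infinite cyclic) groups. The only real issue to worry about is ensuring every step of the inductive construction is genuinely effective, but a polycyclic presentation together with its collection algorithm gives all the needed normal-form computation essentially for free, so the main obstacle — if one can call it that — reduces to a careful citation: the Mal'cev-type theorem on uniform membership is the sole ingredient imported as a black box, and everything else follows mechanically from Lemma \ref{lem:extconst}.
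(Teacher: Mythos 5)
Your proposal is correct and follows essentially the same route as the paper: induct on the length of the poly-(infinite cyclic) series (equivalently the Hirsch length), apply Lemma \ref{lem:extconst} to the algorithmic short exact sequence $1\to G_{n-1}\to G\to \Z\to 1$ for property (\ref{map}), and import the solvability of the uniform membership problem for polycyclic groups as a black box (the paper cites \cite{BCRS} where you cite the Mal'cev-type theorem, but this is only a difference of reference).
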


\begin{proof}
The uniform membership problem is shown to be solvable for polycyclic groups in \cite{BCRS}. To verify that condition (2) of Definition \ref{ALI} holds, one proceeds by induction on the Hirsch length of the polycyclic group and repeatedly applies Lemma \ref{lem:extconst}.
\end{proof}

\begin{Lem}\label{cond2}
Let $\ga$ be a simplicial graph and  $\mathfrak{G}=\{G_v \mid v\in V\ga\}$ a family of groups that satisfy property \eqref{map} of Defintion \ref{ALI}. Then $G\coloneqq \ga \mathfrak{G}$ satisfies property \eqref{map} of Defintion \ref{ALI}.  
\end{Lem}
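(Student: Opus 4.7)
\emph{Proof plan.} I would proceed by induction on $|V\Gamma|$. The base case $|V\Gamma|=1$ is immediate, since then $G=G_v$ and the hypothesis applies directly.

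For the inductive step, let $S = \{w_1,\ldots,w_n\}\subset G$ generate a non-trivial subgroup $H$. For each full proper subgraph $\Lambda\subsetneq\Gamma$ there is a computable retraction $\rho_\Lambda\colon G\to G_\Lambda$ that is the identity on $G_v$ for $v\in\Lambda$ and sends $G_u$ to $1$ for $u\notin\Lambda$; this is well defined because every defining relation $[g_u,g_v]=1$ becomes either itself or $[g_u,1]=1$. By the inductive hypothesis applied to the smaller graph $\Lambda$, the group $G_\Lambda$ satisfies property~\eqref{map}, and I apply its algorithm to $\{\rho_\Lambda(w_i)\}$ to obtain integers $\{n_i\}$. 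Since an epimorphism onto $\mathbb{Z}$ cannot send an entire generating set of a non-trivial group to zero, the $\{n_i\}$ are not all zero precisely when $\rho_\Lambda(H)$ is non-trivial, in which case they give, via composition with $\rho_\Lambda|_H$, the desired epimorphism $H\to\mathbb{Z}$. Iterating over the finitely many proper $\Lambda$, if any one succeeds we are done.

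The residual case is when every proper-subgraph retraction yields the all-zero output, so $H\subseteq\bigcap_{\Lambda\subsetneq\Gamma}\ker\rho_\Lambda$. For this I would use the algorithmic short exact sequence
\[
  1\longrightarrow K\longrightarrow G\stackrel{\pi}{\longrightarrow}\prod_{v\in V\Gamma}G_v\longrightarrow 1,
\]
where $\pi$ is the natural surjection onto the direct product (well defined since $\prod_v G_v$ satisfies all the defining relations of $G$), together with Lemma~\ref{lem:extconst}. The quotient $\prod_v G_v$ satisfies property~\eqref{map} by a routine direct-product argument: a non-trivial subgroup has non-trivial projection to some factor $G_v$, and one invokes the hypothesis on that $G_v$. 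The kernel $K$ is the normal closure in $G$ of the basic commutators $[g_u,g_w]$ with $(u,w)\notin E\Gamma$. Extending the explicit commutator identity used in the proof of Lemma~\ref{lem:free_prod} to the graph-product setting allows one to algorithmically rewrite an element of $K$ as a product of such basic commutators; in the edgeless case $K$ is the classical Cartesian subgroup (a free group), which satisfies property~\eqref{map}, and in general $K$ inherits an iterated free-product-of-free-groups structure on which property~\eqref{map} holds.

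The main obstacle is precisely this residual case: producing an effective generating set for $K$, verifying property~\eqref{map} for it (by a secondary induction on the non-edge count of $\Gamma$), and effectively rewriting the $w_i$ as words in those generators so that Lemma~\ref{lem:extconst} can be applied. This extends the free-product rewriting performed in Lemma~\ref{lem:free_prod} to the arbitrary simplicial graph situation.
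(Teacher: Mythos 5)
Your opening step (running through all proper retractions $\rho_\Lambda$) is not where the difficulty lies, and it has a minor defect: the algorithm guaranteed by property \eqref{map} is only required to behave correctly on inputs that generate a \emph{non-trivial} subgroup, so you cannot simply run it on $\{\rho_\Lambda(w_i)\}$ and read off whether the output is all zero in order to detect triviality of $\rho_\Lambda(H)$ --- some word-problem capability must be supplied (the paper packages this into the notion of an algorithmic short exact sequence). The genuine gap is in your residual case. You pass to the kernel $K$ of $G\to\prod_v G_v$ in a single step and assert that $K$ carries an ``iterated free-product-of-free-groups structure''. A free product of free groups is itself free, and $K$ is not free in general: the lemma is stated for arbitrary simplicial graphs, and for $\ga=C_4$ with all vertex groups infinite cyclic one has $G=F_2\times F_2$, the map to $\prod_v G_v=\Z^4$ is the abelianization, and $K=[F_2,F_2]\times[F_2,F_2]$, a direct product of two infinite-rank free groups (it contains $\Z^2$, so it is certainly not free). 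Consequently the proposed rewriting of elements of $K$ as words in a free basis of basic commutators, the appeal to Lemma \ref{lem:free_prod}, and the secondary induction on the non-edge count all break down for such graphs.

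The paper avoids this by killing one vertex at a time instead of all vertices simultaneously. Fixing $v\in V$ and setting $\Lambda=\ga_{V-\{v\}}$, one has $G\cong (G_v\times G_{\Link(v)})*_{G_{\Link(v)}}G_\Lambda$, and Bass--Serre theory applied to this amalgam shows that the kernel $\gen{G_v^G}$ of the retraction $G\to G_\Lambda$ is a free product of conjugates of $G_v$; Lemma \ref{lem:free_prod} then gives property \eqref{map} for $\gen{G_v^G}$, and Lemma \ref{lem:extconst} applied to $1\to\gen{G_v^G}\to G\to G_\Lambda\to 1$ closes the induction, with $G_\Lambda$ handled by the inductive hypothesis. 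If you want to salvage your outline, replace your single short exact sequence onto $\prod_v G_v$ by this one-vertex-at-a-time extension: that is precisely the point at which the free-product structure your argument needs actually exists.
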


\begin{proof} Let $S= \{w_1,\ldots,w_k\} \neq \{1\}$ be a finite collection of words from $G$ and let $H$ be the subgroup generated by $S$ in $G$. The general case reduces to the situation when $\Gamma$ is finite. This is because for infinite $\Gamma$, the finite set $S$ involves a finite subset $\Lambda$ of $\Gamma$. Moreover, $G_\Lambda $ is a retract of $G$. Once we have a computable epimorphism of the subgroup generated by $S$ in $G_\Lambda$, composing with the canonical surjection $G\rightarrow G_\Lambda$, we get the required computable epimorphism from $H$ onto $\Z$. Therefore we may assume that the defining graph $\Gamma$ is finite. 

We now proceed by induction on the size of its vertex set $|V|$.  If $|V|=1$ then $G \cong G_v$, $v\in V$ and there is nothing to prove. Fix $v \in V$ and consider $\Lambda$, the full subgraph of $\Gamma$ with vertex set $V - \{v\}$. Also, the normaliser of $G_v$ in $G$ is precisely $G_v \times G_{\Link(v)}$, here $\Link(v)$ is the {\it link} of $v$ in $\Gamma$, that is the full subgraph of $\Gamma$ whose vertex set are vertices adjacent to $v$. It is now easy to see that $G \cong (G_v \times G_{\Link(v)} )*_{G_{\Link(v)}}  G_\Lambda$ and using Bass-Serre theory, we deduce that $\gen{G_v^G}$, the normal subgroup generated by $G_v$, is a free product of $*_{t\in T}G_v^t$, where $T$ is in one-one correspondence with cosets of $G_v\times G_{\Link(v)}$ in $G$. Our graph product $G$ fits into the split short exact sequence: 
\begin{equation}\label{eq:seq}1 \rightarrow \gen{ G_v^{G}}  \stackrel{i}{\rightarrow}G \stackrel{\pi}{\rightarrow} G_\Lambda \rightarrow 1.\end{equation}

Using normal forms for graph products, we can solve the membership problem for $G_v\times G_{\Link(v)}$ in $G$. Hence, one can easily choose minimal length representatives for cosets of $G_v\times G_{\Link(v)}$ in $G$. Then we can compute the normal forms in the free product decomposition of $\gen{G_v^G}$ of pre-images of elements of $G$ mapping to $1$ in $G_\Lambda$ and,  by Lemma \ref{lem:free_prod}, $\gen{ G_v^{G}}$ satisfies property \eqref{map} of Definition \ref{ALI}. 

Note that, using normal forms, $i$ and $\pi$ in \eqref{eq:seq} are given in an algorithmic way. Hence \eqref{eq:seq} is an algorithmic short exact sequence. Moreover, by induction hypothesis, property \eqref{map} of Definition \ref{ALI} holds for  $G_\Lambda$. Therefore, the lemma follows from  Lemma \ref{lem:extconst}.  
\end{proof}

\begin{proof}[Proof of Proposition \ref{prop:chordal}] Let $G$ be graph product of poly-(infinite cyclic) groups over a chordal graph. Arguing as in \cite[Corollary 1.3]{Kapovich}, one can prove that $G$ is the fundamental group of a graph of groups $(Y,G(-))$, where $Y$ is a tree and the vertex and edge groups are polycyclic and so by \cite[Theorem 1.1]{Kapovich} the uniform membership problem is solvable. We need to verify property (\ref{map}) of Definition \ref{ALI}. Since poly-(infinite cyclic) groups are algorithmically locally indicable (Corollary \ref{cor:SP}), the proposition follows from Lemma \ref{cond2}. 
\end{proof}

\section{Proof of Theorem C}\label{sec:ThmC}
The proof of Theorem C is similar to the one for free products and is based on \cite{BBaumslag84, Brodskii84}. We will repeatedly use the following fact.
\begin{Lem}
Let $G=(A\times C)*_C(B\times C)$. Then $G\cong (A*B)\times C$ and any automorphism of $A*B$ extends to an automorphism of $G$ that fixes $C$.
\end{Lem}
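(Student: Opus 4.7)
The plan is simply to verify the isomorphism $G \cong (A*B)\times C$ by constructing mutually inverse homomorphisms via universal properties; the extension of automorphisms will then be immediate.

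For the forward map $\Phi \colon G \to (A*B)\times C$, I would use the universal property of the amalgamated free product. The assignments $a \mapsto (a,1)$ and $c\mapsto (1,c)$ define a homomorphism $A\times C\to (A*B)\times C$, and similarly for $B\times C$; these two homomorphisms agree on the common subgroup $C$, so they induce the desired $\Phi$. For the inverse $\Psi \colon (A*B)\times C \to G$, the inclusions $A\hookrightarrow A\times C \hookrightarrow G$ and $B\hookrightarrow B\times C \hookrightarrow G$ induce a homomorphism $A*B \to G$ by the universal property of the free product; its image commutes with $C$ inside $G$ (since $A$ and $B$ each commute with $C$ there), so combining it with the inclusion $C\hookrightarrow G$ produces $\Psi$. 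A routine check on the generating subgroups $A$, $B$, $C$ shows that $\Phi\circ\Psi$ and $\Psi\circ\Phi$ are the identity.

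The second assertion follows without further work: given any $\phi \in \operatorname{Aut}(A*B)$, the product $\phi \times \mathrm{id}_C$ is an automorphism of $(A*B)\times C$ that fixes $C$ pointwise, and transporting it along the isomorphism $\Phi$ yields the required automorphism of $G$. There is no real obstacle in any of this; the lemma is essentially bookkeeping with universal properties, and is included mainly so that subsequent arguments can freely switch between the two descriptions of $G$.
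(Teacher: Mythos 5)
Your argument is correct, and the paper itself offers no proof of this lemma (it is stated as a fact to be used repeatedly), so your universal-property verification is exactly the standard argument one would supply. The only point worth making explicit is that in $\Psi$ the subgroup of $G$ generated by the images of $A$ and $B$ centralizes $C$ because the relations $[a,c]=1$ and $[b,c]=1$ already hold in the factors $A\times C$ and $B\times C$ and hence in $G$ — which you do note — so there is nothing to add.
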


\begin{Lem}\label{lem:kerphi}
Let $A$ be generated by $(a_1,\dots, a_k)$ and let $(z_1,\dots,z_k)\in \Z$ such that $a_i\mapsto z_i$, $i=1,\dots,k$ extends to an epimorphism $\phi\colon A\to \Z$. Let $a\in A$ such that $\phi(a)=1$. Let $n>0$ be an integer  and $\widehat{A}\coloneqq A*_{\gen{ a= \alpha^n}}\gen{\alpha |\;}$, i.e. the group obtained from $A$ by adding an $n$-th root of $a$.
\begin{itemize}
\item[\textrm{(i)}] If $A$ is  locally indicable, so is $\widehat{A}$.
\item[\textrm{(ii)}] If $A$ is algorithmically locally indicable, so is $\ker \widehat{\phi}$, where $\widehat{\phi} \colon \widehat{A}\to \Z$ is  defined by $\widehat{\phi}(g)=n\cdot g$, $g\in A$ and $\widehat{\phi}(\alpha)=1.$
\end{itemize}
\end{Lem}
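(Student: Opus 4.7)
The plan is to analyse $K = \ker\widehat\phi$ using Bass-Serre theory on the amalgamated free product $\widehat A = A *_{\langle a\rangle} \langle\alpha\rangle$, where $\langle a\rangle$ is identified with $\langle\alpha^n\rangle$. This decomposition makes sense because $\phi(a)=1$ forces $a$ to have infinite order in $A$, so $\langle a\rangle\cong\Z$; and $\widehat\phi$ is well-defined because its values on the identified subgroup agree, namely $\widehat\phi(\alpha^n)=n=n\phi(a)=\widehat\phi(a)$.

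Let $T$ be the Bass-Serre tree for this decomposition. Note that $\widehat\phi$ is injective on $\langle\alpha\rangle$ and equals $n\phi$ on $A$; in particular $K\cap A=\ker\phi$, $K\cap\langle\alpha\rangle=1$, and $K$ intersects every edge stabiliser (a conjugate of $\langle\alpha^n\rangle$) trivially. Since $K$ is normal in $\widehat A$ with $\widehat A/K\cong\Z$, I count $K$-orbits on vertices and edges of $T$ via $\widehat\phi(A)=n\Z$, $\widehat\phi(\langle\alpha\rangle)=\Z$, $\widehat\phi(\langle\alpha^n\rangle)=n\Z$: this gives $n$ orbits of $A$-vertices (each with stabiliser a conjugate of $\ker\phi$), one orbit of $\langle\alpha\rangle$-vertices, and $n$ orbits of edges. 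A connected graph with $n+1$ vertices and $n$ edges is a tree, so Bass-Serre theory yields
\[K\cong\underbrace{\ker\phi*\cdots*\ker\phi}_{n\text{ copies}}.\]

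For (i), any subgroup of a locally indicable group is locally indicable, so $\ker\phi$ is locally indicable; a free product of locally indicable groups is locally indicable (a classical result due to Brodskii and Howie), so $K$ is locally indicable. Then the short exact sequence $1\to K\to\widehat A\to\Z\to 1$ makes $\widehat A$ locally indicable, because any finitely generated non-trivial subgroup of $\widehat A$ either maps non-trivially to $\Z$ or lies entirely in $K$, and in either case has an infinite cyclic quotient.

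For (ii), I first verify that $\ker\phi$ is itself algorithmically locally indicable: $\phi$ is computable from its defining data $(a_i\mapsto z_i)$, so the uniform membership problem in $\ker\phi$ reduces to that in $A$; and property (\ref{map}) of Definition \ref{ALI} is inherited because any finitely generated subgroup of $\ker\phi$ is also a finitely generated subgroup of $A$, to which the algorithm for $A$ applies. Applying Lemma \ref{lem:free_prod} $n-1$ times then shows that the finite free product $K$ is algorithmically locally indicable. The most delicate step is the Bass-Serre orbit count identifying $K\backslash T$ as a tree, which is what guarantees $K$ is a pure free product with no extra free factor coming from $\pi_1$ of the underlying quotient graph.
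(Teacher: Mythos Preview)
Your proof is correct and follows essentially the same Bass--Serre analysis as the paper, decomposing $\ker\widehat\phi$ as a free product of $n$ conjugates of $\ker\phi$ and then invoking closure of (algorithmic) local indicability under free products and extensions; your orbit count via $\widehat\phi$ is just a repackaging of the paper's double-coset count, and your Euler-characteristic argument that $K\backslash T$ is a tree plays the same role as the paper's observation that $\widehat K$ is generated by the conjugates $\alpha^i(\ker\phi)\alpha^{-i}$, hence has no extra free factor. The one point the paper makes explicit that you leave implicit in (ii) is that, to transport the algorithms supplied by Lemma~\ref{lem:free_prod} back to elements of $\ker\widehat\phi$ given as words in $\widehat A$, one must note that the normal form in the free product decomposition can be effectively computed (which is routine here, since membership in $\gen{a}\leq A$ is decidable).
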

\begin{proof}

Let $K\coloneqq \ker \phi$ and $\widehat{K}\coloneqq \ker \widehat{\phi}$. The group $\widehat{K}$ acts on the Bass-Serre tree $T$ of $A*_{\gen{ a= \alpha^n}}\gen{\alpha |\;}$. The intersection of $\widehat{K}$ with the conjugates of $\gen{\alpha |\;}$ is trivial and the intersection of $\widehat{K}$ with conjugates of $A$ is isomorphic to $K$. Then using Bass-Serre theory \begin{equation}\label{eq:freeprod}\widehat{K}= *_{g\in \widehat{K}\backslash \widehat{A} /A}(\widehat{K}\cap gAg^{-1})*F\end{equation}  where $F$ is a free group. Since $\widehat{K}\backslash \widehat{A}\cong \gen{\alpha}$ and $\alpha^n\in A$, we conclude that $\{1,\alpha,\dots, \alpha^{n-1}\}$ is a transversal for the double cosets $\widehat{K}\backslash \widehat{A} /A$. Moreover, since $\widehat{K}=\gen{K,\alpha K \alpha ^{-1},\dots, \alpha^{n-1}K \alpha^{n-1}}$, we conclude that $F=1$.

If $A$ is locally indicable, $K$ is locally indicable and $\widehat{K}$ is locally indicable. Thus, $\widehat{A}$ is an extension of locally indicable groups and hence locally indicable. This proves (i).

To prove (ii) observe that for any element $k \in \widehat{K}$, we can effectively compute the normal form of $k$ in the free product  \eqref{eq:freeprod}. By Lemma \ref{lem:free_prod}, $\widehat{K}$ is algorithmically locally indicable since it is a free product of algorithmically locally indicable groups.
\end{proof}

\begin{Lem}\label{lem:memberZ}
Let $A$ be a group and let $a\in A$. Suppose that the membership problem for $\gen{a}$ in $A$ is solvable. Then, for any group $C$, the membership problem for $\gen{a, C}$ is solvable in $A\times C$.
\end{Lem}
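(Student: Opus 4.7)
The plan is to reduce the membership problem for $\gen{a, C}$ inside $A \times C$ to the assumed membership problem for $\gen{a}$ inside $A$, by exploiting the direct product structure. Since $a$ centralises $C$ in $A\times C$, the subgroup $\gen{a,C}$ admits a particularly simple description.

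First I would verify that $\gen{a, C} = \gen{a}\cdot C$, and that this product is in fact direct, since $\gen{a}\cap C \subseteq A \cap C = \{1\}$; hence $\gen{a, C} = \gen{a}\times C$ as subgroups of $A\times C$. Consequently, an element $g = (x, y) \in A \times C$ lies in $\gen{a, C}$ if and only if $x \in \gen{a}$, so the $C$-coordinate $y$ plays no role.

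The algorithm is then immediate: given an element $g \in A \times C$, apply the canonical projection $\pi_A \colon A \times C \to A$ to obtain $\pi_A(g) \in A$, and then invoke the assumed algorithm for membership in $\gen{a} \leq A$ to decide whether $\pi_A(g) \in \gen{a}$. The projection $\pi_A$ is effectively computable (it simply trivialises every generator of $C$ and leaves generators of $A$ unchanged), so the procedure is algorithmic, and by the previous paragraph it correctly decides membership in $\gen{a, C}$. No substantial obstacle remains: the content of the lemma is really the algebraic identity $\gen{a, C} = \gen{a}\times C$ combined with the effectiveness of $\pi_A$ on words over $A \cup C$.
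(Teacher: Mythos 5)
Your proof is correct and is essentially identical to the paper's: both identify $\gen{a,C}$ with $\gen{a}\times C$ inside $A\times C$ and reduce the question for $(a_0,c_0)$ to deciding whether $a_0\in\gen{a}$ via the (computable) projection to $A$. Your write-up is in fact slightly more complete, as you explicitly note that $\gen{a}\cap C=\{1\}$ and that the projection is effective.
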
\begin{proof}
The subgroup $\gen{a,C}$ is isomorphic to $\gen{a}\times C$ and so an element $(a_0,c_0)\in A\times C$ belongs to $\gen{a,C}$ if and only if $a_0$ belongs to $\gen{a}$, 
\end{proof}

\begin{ThmC}
Let $A, B$ and $C$ be  groups and  $G\coloneqq (A\times C)*_C(B\times C)$. Let $w\in G$ and suppose that it is not conjugate to an element of $A\times C$ nor of $B\times C$. 
Then the following hold.
\begin{enumerate}
\item[{\rm(i)}](Freiheitssatz) If $A$ and $B$ are locally indicable, then the natural map $(A\times C)\to G/\normgen{w}$ is injective.
\item[{\rm(ii)}](Membership problem) If moreover  $A$ and $B$ are algorithmically locally indicable;
then the membership problem for $A\times C$ is solvable in the group $G/\normgen{w}.$ 
\end{enumerate}
\end{ThmC}

\begin{proof}

Since $G\cong (A*B)\times C$, we can use the normal form of free products to express the element $w$  as a reduced word $a_1b_1\dots a_lb_lc$, where $a_i\in A,$  $b_i\in B$ and $c\in C$.  After conjugation, if necessary, we can assume that $a_i\neq 1\neq b_i$ for $i=1,\dots, l$. The proof proceeds by induction on $l$.

If $l=1$, since $w$ is not conjugate to an element of either $A\times C$ nor of $B\times C$, both $a_1, b_1$ are non-trivial. Moreover, $A$ and $B$ are locally indicable which implies that $\gen{a_1}\cong\gen{b_1}\cong\Z$ and $\gen{C,a_1}\cong \gen{C,b_1c}\cong C\times \Z$. The group $G/\normgen{w}$ is therefore the amalgamated free product $(A\times C)*_{\gen{C,a_1}=\gen{C,b_1c}}(B\times C)$ and (i) holds. Under the hypothesis of (ii), the membership problem $\gen{a_1}$ in $A$ is solvable (resp. $\gen{b_1}$ in $B$) and so by Lemma \ref{lem:memberZ}, the membership problem for $\gen{a_1, C}$ is solvable in $A\times C$, (resp. $\gen{b_1c, C}$ in $B\times C$). Hence, normal forms may be effectively computed in $G/\normgen{w}$ view as an amalgamated free product and then, the membership problem for $A\times C$ (and $B\times C$) is solvable in $G/\normgen{w}$ and (ii) holds. 

Suppose now that $l$ is at least 2. Let $A_0=\gen{a_1,\dots a_l},$  $B_0=\gen{b_1,\dots b_l}$ and $G_0=(A_0\times C)*_C (B_0\times C)=\gen{A_0,B_0}\leqslant G$. If $A$ and $B$ are algorithmically locally indicable, so are $A_0$ and $B_0$. To show (i), it is enough to show that $(A_0\times C)$ naturally embeds in $G_0/\normgen{w}$, since then $A\times C$ naturally  embeds in $(A\times C)*_{(A_0\times C)} (G_0/\normgen{w})$. To show (ii), assuming that (i) holds, it is enough to show that $(A_0\times C)$ and $(B_0\times C)$ have  solvable membership problem in $G_0/\normgen{w}$, since then we can compute normal forms in $$G/\normgen{w}=(A\times C)*_{(A_0\times C)} (G_0/\normgen{w})*_{(B_0\times C)} (B\times C)$$ and in particular solve the membership problem for $A\times C$ and $B\times C$ in $G/\normgen{w}$. So we can assume from henceforth that $A=A_0,$ $B=B_0$ and $G=G_0.$ 

Since $A$ and $B$ are locally indicable, there exists epimorphism $\phi_A\colon A\to \Z$ and $\phi_B \colon B\to\Z$, and, since $G\cong (A*B)\times C$, we can extend them to $\widehat{\phi_A}\colon G\to \Z$ and $\widehat{\phi_B}\colon G \to \Z$.  For the proof of (ii) we can assume that $\phi_A,$ $\phi_B,$ $\widehat{\phi_A}$ and $\widehat{\phi_B}$  are computable. 

\textbf{Case 1:} $\widehat{\phi_A}(w)=0$ or $\widehat{\phi_B}(w)=0.$
In this case we will assume that $\widehat{\phi_A}(w)=0.$ Due to the asymmetry of this case, to cover all possibilities, we have to show that $A\times C$ and $B\times C$ naturally embed in $G/\normgen{w}$ and that, under the extra hypothesis  of (ii), $(A\times C)$ and $(B\times C)$ have solvable membership problem in $G/\normgen{w}.$

Choose $a\in A$  such that $\widehat{\phi_A}(a)=1$. Let $\widetilde{A}$ denote the kernel of $\phi_A$ and let $H$ denote the kernel of  $\widehat{\phi_A}$. Then, writing $B^i\coloneqq a^{-i}Ba^{i}$, we have 
$$H=C\times (\widetilde{A}*B^0*B^{-1}*B^{1}*B^{-2}*B^{2}*\dots )$$
For $p,q\in \Z$, $p\leq q$ let  $$H_{[p\uparrow q]}=\widetilde{A}*B^{p}*B^{p+1}*\cdots *B^{q}\leqslant H.$$
For $i=1,\dots, l,$ let $m_i=\phi_A(a_i)$ and we rewrite $a_i$ as  $a^{m_i}\tilde{a}_i,$ where $\tilde{a}_i\in \widetilde{A}$. Since $\phi_A(w)=0$, we can rewrite $w$ as 
\begin{equation}\label{eq:wrewritten}
a^{m_1}\tilde{a}_1a^{-m_1} (a^{m_1} b_1 a^{-m_1}) a^{m_1+m_2}\tilde{a}_2 \cdots (a^{m_1+\dots+m_l}b_l a^{-m_1-\dots-m_l})c
\end{equation}

 Let $\mu=\min\{ \sum_{i=1}^j -m_i: j=1,\dots, l \}$ and $\nu= \max\{ \sum_{i=1}^j -m_i: j=1,\dots, l \}$. After conjugating $w$ by $a$ or $a^{-1}$ we can assume that $\mu\leq 0$. If $\nu=\mu$ then, since $\sum_{i=1}^l m_i=0$, we have that $m_i=0$ for $i=1,\dots, l$, and hence $\phi_A$ is the constant map $0$, a contradiction. Therefore, $\mu<\nu$, $w\in H_{[\mu\uparrow \nu]}\times C$, and using normal forms,  $w$ is not conjugate to an element of $H_{[\mu+1\uparrow \nu]}\times C$ nor of $H_{[\mu\uparrow \nu-1]}\times C$. Moreover, since the sequence $(\sum_{i=1}^j -m_i: j=1,\dots, l )$ is not constant, the number of times that it takes the value $\mu$ (resp. $\nu$) is strictly less than $l$. Then $w$ as a word in the free product with amalgamation $(H_{[\mu\uparrow \nu-1]}\times C)*_C(B^{\nu}\times C)$ and as a word in $(H_{[\mu+1\uparrow \nu]}\times C)*_C(B^{\mu}\times C)$ has length less than $l$.

Since the free product of (algorithmically) locally indicable groups is (algorithmically) locally indicable,  $H_{[\mu+1 \uparrow \nu]}$, $H_{[\mu\uparrow \nu-1]}$, $B^{\mu}$ and $B^{\nu}$ are (algorithmically) locally indicable. Now, the induction hypothesis says that $H_{[\mu+1\uparrow \nu]}\times C$ and $B^{\mu}\times C$ naturally embed (resp. have solvable membership problem) in $$(H_{[\mu\uparrow \nu]}\times C)/\normgen{w}=((H_{[\mu+1\uparrow \nu]}\times C)*_{C} (B^\mu\times C))/\normgen{w},$$ and  $H_{[\mu\uparrow \nu-1]}\times C$ and $B^{\nu}\times C$ naturally embed (resp. have solvable membership problem) in $$(H_{[\mu\uparrow \nu]}\times C)/\normgen{w}=((H_{[\mu\uparrow \nu-1]}\times C)*_{C} (B^\nu\times C))/\normgen{w}.$$

Assume that $A$ and $B$ are locally indicable. The conjugation by $a$ in $H$ induces an isomorphism  $a\colon (H_{[\mu\uparrow\nu-1]}\times C) \to (H_{[\mu+1\uparrow\nu]}\times C)$ and $G/\normgen{w}$ is the HNN-extension 
\begin{equation}\label{eq:HNNext}
G/\normgen{w}=\left\langle\ \frac{H_{[\mu\uparrow \nu]}\times C}{\normgen{w}}, a\ \vline\ a(H_{[\mu\uparrow\nu-1]}\times C)a^{-1}= H_{[\mu+1\uparrow\nu]}\times C \right\rangle
\end{equation}
Then $(H_{[\mu\uparrow \nu]}\times C)/\normgen{w}$ embeds in $G/\normgen{w}$. In particular  $B^0\times C$ embeds in $(H_{[\mu\uparrow \nu]}\times C)/\normgen{w}$, that is $B\times C$ embeds in $G/\normgen{w}$. Similarly $\widetilde{A}\times C$ naturally embeds in $G/\normgen{w}$. 
To show that $A\times C$ naturally embed in $G/\normgen{w}$ suppose that $x,y\in A \times C$ such that $x\in y\gen{w^G}$ (i.e. $x$ and $y$ are equal in $G/\normgen{w}$), we have to show that $x=y$. Since $\widehat{\phi_A}(w)=0$, the map $\widehat{\phi_A}\colon G\to \Z$ factors through $G/\normgen{w}$.  Hence, $x,y$ have the same image under $\widehat{\phi_{A}}$ and then $y^{-1}x\in(\tilde{A}\times C)\cap \gen{w^{G}}$. Since $\tilde{A}\times C$ naturally embed in $G/\normgen{w}$ the previous intersection is the trivial group and then $x=y$. This completes the proof of (i) in Case 1.

Assume now that $A$ and $B$ are algorithmically locally indicable. Then, as noticed before, the membership problem for $H_{[\mu+1\uparrow \nu]}\times C$ and $H_{[\mu\uparrow \nu-1]}\times C$  in $(H_{[\mu\uparrow \nu]}\times C)/\normgen{w}$ is solvable. This means that normal forms in the HNN-extension \eqref{eq:HNNext} can be effectively computed and we can solve the membership problem for $\tilde{A}\times C$ and $B\times C$ in $G/\normgen{w}$. To solve the membership problem for $A\times C$ in $G/\normgen{w}$, take $x\in G$ and $y\in A\times C$ such that $\widehat{\phi_A}(x)=\widehat{\phi_A}(y)$. Taking into account that $\widehat{\phi}_A$ is constructible, given $x\in G$ we can compute such an $y$. Notice that $x\in A\times C$ if and only if $xy^{-1}\in \tilde{A}\times C$.  It follows that  we can solve the membership problem for $A\times C$  in $G/\normgen{w}$. This completes the proof of (ii) in Case 1.

\textbf{Case 2:} $\widehat{\phi_A}(w)=m\neq 0$ and $\widehat{\phi_B}(w)=n\neq 0.$
Due to the symmetry of this case, we only have to show that $A\times C$ embeds in $G/\normgen{w}$ and that, under the extra hypothesis of (ii), $A\times C$ has solvable membership problem in $G/\normgen{w}.$ 

\textbf{Subcase 2.1:} $B\cong \Z.$

In this situation $B=\gen{b}$ and $w=a_1 b^{n_1}\cdots a_lb^{n_l}c$. It is important to notice that no $n_i$ is equal to 0. Let  $n=\phi_B(w)=n_1+\dots +n_l.$

Let $\widehat{A}$ be the group obtained from $A$ by adding $\alpha$, an $n$th-root of $a$, that is  $\widehat{A}=A*_{\gen{a=\alpha^n}}\gen{\alpha| \;}$. We construct an epimorphism $\phi_{\widehat{A}}\colon\widehat{A}\to \Z$  by setting $\phi_{\widehat{A}}(g)=n\cdot \phi_A(g)$, for $g\in A$ and  $\phi_{\widehat{A}}(\alpha)=1$.  Let $\widehat{G}=(\widehat{A}\times C)*_C(B\times C)=(\widehat{A}*B)\times C.$ We can extend $\phi_{\widehat{A}}$ to $\widehat{\phi_{\widehat{A}}}\colon \widehat{G}\to \Z.$ Under the hypothesis of (ii), both $\phi_{\widehat{A}}$ and $\widehat{\phi_{\widehat{A}}}$ are computable epimorphisms. If $m=\widehat{\phi_A}(w)$, we have that $\widehat{\phi_{\widehat{A}}}(w)=nm$.

We define a map $f\colon \widehat{A}\cup B\cup C\to \widehat{A}\cup B\cup C$ by $f(g)=g$ for all $g\in \widehat{A}\cup C$ and $f(b)=b\alpha^{-m}$. Clearly $f$ extends to  an isomorphism of $\widehat{G}$ (also denoted by $f$). Then $f(w)=a_1(b\alpha^{-m})^{n_1}\dots a_l(b\alpha^{-m})^{n_l}c$ and $\widehat{\phi_{\widehat{A}}}(f(w))=0$.

Note that, by Lemma \ref{lem:kerphi}, $\widehat{A}$ is still locally indicable, and, if $A$ is algorithmically locally indicable, so is $\widetilde{A}=\ker \phi_{\widehat{A}}$. Let $H$ denote the kernel of  $\widehat{\phi_{\widehat{A}}}$, then  
$$H=C\times (\widetilde{A}*B^0*B^{-1}*B^{1}*B^{-2}*B^{2}*\dots )$$
where $B^i\coloneqq \alpha^{-1}B\alpha^{i}$. For $p,q\in \Z$, $p\leq q$ let $H_{[p\uparrow q]}$ denote $\widetilde{A}*B^{p}*B^{p+1}*\cdots *B^{q}\leqslant H$. We record the following
\begin{equation}\label{eq:Hloq}
\text{If $A$ and $B$ are algorithmically indicable then so is $H_{[p\uparrow q]}$.}
\end{equation}
Let $m_i=\phi_{\widehat{A}}(a_i)$ then $a_i=\alpha^{m_i}\tilde{a}_i$. To simplify notation, we will write $\lsup{k}b$ to denote $\alpha^{k}b\alpha^{-k}$.
We can rewrite $f(w)$ as
\begin{align*}
f(w)&=\alpha^{m_1}\tilde{a}_1(b\alpha^{-m})^{n_1}\dots \alpha^{m_l}\tilde{a}_l(b\alpha^{-m})^{n_l}c\\
 &=a'_1(\lsup{k_{1,1}}{b})^{\eps_{1,1}}\cdots (\lsup{k_{1,n_1}}{b})^{\eps_{1,n_1}}
\dots a'_l(\lsup{k_{l,1}}{b})^{\eps_{l,1}}\cdots (\lsup{k_{l,n_l}}{b})^{\eps_{l,n_l}}c
\end{align*}
where $a'_i=\alpha^{k_{i,1}}\tilde{a_i}\alpha^{-k_{i,1}}\in \widetilde{A}$, $\eps_{i,j}=n_i/|n_i|\in \pm 1,$  $1\leq i\leq l$ and $k_{i,j}=\left(\sum_{p=1}^{i} m_p\right) -\left(\sum_{p=1}^{i-1} n_i m\right)-(j-1)n$, if $n_i$ is positive and $k_{i,j}=\left(\sum_{p=1}^{i} m_p\right) -\left(\sum_{p=1}^{i-1} n_i m\right)-jm$, if $n_i$ is negative.

 Let $\mu=\min \{ k_{i,j} :1\leq i\leq l, 1\leq j\leq n_{i}\}$ and $\nu=\max\{k_{i,j} : 1\leq i\leq l, 1\leq j\leq n_i\}.$  Since for all $i=1,\dots, l$, $n_i\neq 0$ we have that
\begin{equation} \label{eq:k_ivalues}
  \{k_{i,j}: 1\leq j\leq n_i\}\mbox{ takes $n_i$ different values}.
\end{equation}
In particular $\mu<\nu$ and the sequence $(k_{i,j} : 1\leq i\leq l,1\leq j\leq n_i)$ takes the value $\mu$ (resp. $\nu$) at most $l$ times.

We claim that $H_{[\mu \uparrow \nu-1]}\times C$ and $H_{[\mu+1 \uparrow \nu]}\times C$ naturally embed in $(H_{[\mu\uparrow \nu]}\times C)/\normgen{f(w)}$, and under the hypothesis of (ii), these groups have solvable membership problem in $(H_{[\mu\uparrow \nu]}\times C)/\normgen{f(w)}$. Therefore we can argue as in Case 1: we have an HNN-extension 
\begin{align}
\widehat{G}/\normgen{w} &\cong \widehat{G}/\normgen{f(w)}\nonumber\\
&= \left\langle \ \frac{H_{[\mu\uparrow \nu]}\times C}{\normgen{f(w)}}, \alpha \ \vline\ \alpha(H_{[\mu \uparrow \nu-1]}\times C)\alpha^{-1}= H_{[\mu+1 \uparrow \nu]}\times C\right\rangle \label{eq:HNN2}.
\end{align}
\noindent and $f(\widehat{A}\times C)$ naturally embeds in $\widehat{G}/\normgen{f(w)}$ and hence $\widehat{A}\times C$ naturally embeds $\widehat{G}/\normgen{w}$. Using the amalgamated free product splitting of $\widehat{A}$ it follows that $A\times C$ embeds $G/\normgen{w}$.

Moreover, if $A$ and $B$ are algorithmically locally indicable, then under the conclusions of the claim, the normal forms in the HNN-extension \eqref{eq:HNN2} are computable and since $\widehat{\phi_{\widehat{A}}}$ is computable, we can conclude arguing as in the previous case, that the membership problem for $f(\widehat{A}\times C)$ in $\widehat{G}/\normgen{f(w)}$ is solvable. Hence the membership problem for $\widehat{A}\times C$ in $\widehat{G}/\normgen{w}$ is solvable.  Since $\gen{a}$ has solvable membership problem in $A$, the normal forms in the free product with amalgamation $A*_{\gen{a=\alpha^n}}\gen{\alpha |\;}$ can be effectively computed and the membership problem for $A\times C$ in $G/\normgen{w}$ is solvable, as required.

We now need to proof the claim. We show only that $H_{[\mu \uparrow \nu-1]}$  naturally embeds in $(H_{[\mu\uparrow \nu]}\times C)/\normgen{f(w)}$ and that under the hypothesis of (ii), this group has solvable membership problem in $(H_{[\mu\uparrow \nu]}\times C)/\normgen{f(w)}$. The corresponding proof for  $H_{[\mu+1 \uparrow \nu]}$ is similar.  

By \eqref{eq:k_ivalues} and the remark below, the sequence $(k_{i,j} : 1\leq i\leq l,1\leq j\leq n_i)$ takes the values $\nu$ at most $l$ times. Hence $f(w)$ has length at most $l$ in $(H_{[\mu \uparrow \nu-1]}\times C)*_C(B^{\nu}\times C)$. If the length is strictly smaller than $l$, then  by induction hypothesis $H_{[\mu \uparrow \nu-1]}\times C$  naturally embeds in $(H_{[\mu\uparrow \nu]}\times C)/\normgen{f(w)}$. Under the extra hypothesis of (ii) along with \eqref{eq:Hloq}, $H_{[\mu \uparrow \nu-1]}$ is algorithmically locally indicable and by induction, $H_{[\mu \uparrow \nu-1]}\times C$  has solvable membership problem in $(H_{[\mu\uparrow \nu]}\times C)/\normgen{f(w)}$.

The remaining case is when $f(w)$ has length $l$ in the free product with amalgamation $(H_{[\mu\uparrow \nu-1]}\times C)*_{C}(B^{\nu}\times C)$ and hence $f(w)=h_1 (\lsup{\nu}{b})^{\eps_1}\dots h_l (\lsup{\nu}{b})^{\eps_l}c$, where $\eps_i=\pm 1$. As we may change $f(w)$ to a conjugate or to its inverse, without loss of generality, we can assume that $\eps_1=1$. We apply an automorphism $\phi$ to $(H_{[\mu\uparrow \nu-1]}\times C)*_{C}(B^{\nu}\times C)$ that fixes $H_{[\mu\uparrow \nu-1]}\times C$ and sends $\lsup{\nu}{b}$ to $\lsup{\nu}{b} h_2^{-1}$. Then $\phi(f(w))$ has length less than $l$ in $(H_{[\mu\uparrow \nu-1]}\times C)*_{C}(B^{\nu}\times C)$ and by induction hypothesis, $H_{[\mu \uparrow \nu-1]}\times C=\phi(H_{[\mu \uparrow \nu-1]}\times C)$  naturally embeds in $(H_{[\mu\uparrow \nu]}\times C)/\normgen{\phi(f(w))}$ and hence, in  $(H_{[\mu\uparrow \nu]}\times C)/\normgen{f(w)}$. Also, under the extra hypothesis of (ii) along with \eqref{eq:Hloq}, $H_{[\mu \uparrow \nu-1]}$ is algorithmically 
locally indicable and by induction, $H_{[\mu \uparrow \nu-1]}\times C$  has solvable membership problem in $(H_{[\mu\uparrow \nu]}\times C)/\normgen{f(w)}$.

\textbf{Subcase 2.2:} $B$ arbitrary.

We first prove (i). The map $\widehat{\phi_B}$ extends to a homomorphism $\pi$ from $(A\times C)*_C (B\times C)$ to $(A\times C)*_C( \gen{b}\times C)$, for some $b \in B$. Then $\pi(w)$, which has length at most $l$, is not conjugate to an element of $A\times C$ or $\gen{b} \times C$. We apply Case 1 and  Subcase 2.1 to conclude that $A\times C$ naturally embeds in $((A\times C)*_C( \gen{b}\times C))/\normgen{\pi(w)}$, and hence $A\times C$ naturally embeds in  $G/\normgen{w}$. This completes the proof of (i) and from now on, we may assume that (i) holds.

Recall that $\widehat{\phi_A}(w)=m\neq 0$ and $\widehat{\phi_B}(w)=n\neq 0.$ In order to show (ii), let $\psi\colon A*B\to \Z\cong\gen{u|\;}$ be defined by $\psi(a)=n\phi_A(a)$ for $a\in A$, and $\psi(b)=-m\phi_B(b)$, for $b\in B$. Setting $K=\ker \psi$, we can regard $A*B$ as a semidirect product $K\rtimes \gen{u|\;}$. In particular, we can rewrite $w$ as \begin{equation}\label{eq:winK}w=\tilde{a_1}u^{nm_1} \tilde{b_1}u^{-mn_1}\cdots  \tilde{a_l}u^{nm_l} \tilde{b_l}u^{-mn_l}zc,\end{equation}
where $\tilde{a_i},\tilde{b_i}\in K$ and $m_i=\phi_A(a_i)$, $n_i=\phi_B(b_i)$ for $i=1,\dots, l$. 
 Since $\psi$ is computable and normal forms are easily constructed in free products, we have that
\begin{equation}\label{eq:semidirect}
\text{for any given $g\in K\rtimes \gen{u|\;}$, we can decide whether or not $g\in A$.}
\end{equation}
Consider the group $G_1= (A*B*\gen{z|\;})\times C$ and the extension $\widehat{\psi}$ of $\psi$ defined by $\widehat{\psi}:G_1\to \gen{u|\;},$ $\widehat{\psi}(g)=\psi(g)$ if $g\in A\cup B$ and $\psi(g)=1$ if $g\in C\cup \{z\}$.

We have an embedding $\iota$ of $G=(A*B)\times C$ into $G_1$ via $a\mapsto a$, $b\mapsto z^{-1}bz$, $c\mapsto c$.
Then, from \eqref{eq:winK} $$\iota(w)= \tilde{a_1}u^{nm_1} z^{-1}\tilde{b_1}u^{-mn_1}z\cdots  \tilde{a_l}u^{nm_l} z^{-1}\tilde{b_l}u^{-mn_l}zc.$$
 The total sum of the exponents of $u$ appearing in $\iota(w)$ is $n\sum m_i-m\sum {n_i}=nm-mn=0$. So, we can rewrite $\iota(w)$ as 
$$\iota(w)= k_1(u^{-t_1}z^{-1}u^{t_1})k_2(u^{-t_2}zu^{t_2}) \cdots k_{2l-1} (u^{-t_{2l-1}}z^{-1}u^{t_{2l-1}})k_{2l}(u^{-t_{2l}}zu^{t_{2l}})c$$
where $k_i\in K$ for $i=1,\dots, 2l$ and $t_{2j}=-n\sum_{i=1}^j m_i+m\sum_{i=1}^j n_{i}$ and $t_{2j-1}=-n\sum_{i=1}^j m_i+m\sum_{i=1}^{j-1} n_{i}$ for $j=1,\dots, l$. Observe that $t_{2l}=0$ and, since at least two $n_i$'s and two $m_i$'s are non-zero, the sequence $(t_i:i=1,\dots, 2l)$  takes at least one non-zero value. Let $\mu=\min \{ t_i : i=1,\dots, 2l\}$ and $\nu=\max\{t_i:i=1,\dots 2l\}$; by the previous observation $\mu<\nu$.

For $p\leq q,$ let $$H_{[p\uparrow q]}=K*\gen{u^{{-p}}zu^{{p}}}*\dots *\gen{u^{{-q}}zu^{q}}.$$
We can express $G_1= (A*B*\gen{z|\;})\times C$ as the HNN extension
\begin{equation}
G_1\cong \gen{ H_{[\mu\uparrow \nu]}\times C, u \,|\, u^{-1}(H_{[\mu \uparrow \nu-1]} \times C)u= H_{[\mu+1 \uparrow \nu]}\times C}
\end{equation}
and by (i), the groups $H_{[\mu \uparrow \nu-1]} \times C$ and  $H_{[\mu+1 \uparrow \nu]}\times C$ naturally embed in $(H_{[\mu\uparrow \nu]}\times C)/\normgen{\iota(w)}$. Hence, $G_1/\normgen{\iota(w)}$ may be written as an HNN extension as follows. 
\begin{equation}\label{eq:HNN22}
G_1/\normgen{\iota(w)}\cong \gen{ (H_{[\mu\uparrow \nu]}\times C)/\normgen{\iota(w)}, u \,|\, u^{-1}(H_{[\mu \uparrow \nu-1]} \times C)u= H_{[\mu+1 \uparrow \nu]}\times C}.
\end{equation}

Since there are at most $2l$ occurrences of $z$ or its inverse in $\iota(w)$, both $\mu$ and $\nu$  can not occur in the sequence $(t_i:i=1,\dots, 2l)$ more that $l$ times. Say that $\nu$ occurs at  most $l$ times. This implies that $\iota(w)$ is a word of length at most $l$ in $(H_{[\mu\uparrow \nu-1]}*\gen{u^{-\nu}zu^{\nu}})\times C$. Observe that $K\leqslant A*B$ is algorithmically locally indicable and so is the free product $H_{[\mu \uparrow \nu-1]}$. Then by Case 2.1. 
\begin{equation}\label{eq:member} 
\text{$H_{[\mu  \uparrow \nu-1]}\times C$ has solvable membership problem in $\dfrac{(H_{[\mu\uparrow \nu-1]}\times C)*_C(C\times \gen{u^{-\nu}zu^{\nu}})}{\normgen{\iota(w)}}$,}
\end{equation} 
and using the conjugation by $u$, we deduce that the membership problem for $H_{[\mu+1  \uparrow \nu]}\times C$ in $(H_{[\mu+1\uparrow \nu]}\times C)*_C(C\times \gen{u^{-\mu}zu^{\mu}})/\normgen{\iota(w)}$ is also solvable. The normal forms for the HNN extension \eqref{eq:HNN22} can be effectively computed.

Now, we can solve the membership problem for $A\times C$ in $G/\normgen{w}$ by solving the membership problem for $\iota (A\times C)=A\times C$ in $G_1/\normgen{\iota(w)}$, and we can do it as follows. Given $g\in G_1/\normgen{\iota(w)}$,  we can compute $\widehat{\psi}(g)$ and then, if $\widehat{\psi}(g)\notin \widehat{\psi}(A)$ we conclude that $g\notin A\times C$. Suppose that $\widehat{\psi}(g)\in \widehat{\psi}(A)$, then we can compute $a\in A$ such that $\widehat{\psi}(a)=\widehat{\psi}(g)$ and then $g\in A\times C$ if and only if $ga^{-1}\in (K\cap A)\times C$. Since normal forms in the HNN extension \eqref{eq:HNN22} can be effectively computed, we can decide whether or not $ga^{-1}$ belongs to the base group. By \eqref{eq:member}, we can decide whether or not $ga^{-1}\in H_{[\mu \uparrow \nu-1]}\times C$. Using normal forms for free products, we can decide whether or not $ga^{-1}\in K\times C$. Finally, by \eqref{eq:semidirect} we can decide whether or not $ga^{-1}\in (K\cap A)\times C$. This completes the proof of 
the theorem.
\end{proof}

\section{Direct products} \label{sec:DP}

\begin{Lem}\label{Lem:exten}
Let $A$, $B$ be groups and $a\in A$ and $b\in B$. Let $\pi$ be the natural quotient map $$\pi\colon \dfrac{A\times B}{\normgen{(a,b)}}\to \frac{A}{\gen{a^A}} \times \frac{B}{\gen{b^B}}.$$
Then $\ker{\pi}$ is abelian of finite rank.
\end{Lem}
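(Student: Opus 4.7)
The plan is to prove something slightly stronger than stated, namely that $\ker\pi$ is actually \emph{cyclic}, which certainly implies it is abelian of finite rank. Write $K \coloneqq \normgen{(a,b)}$ inside $A\times B$, $N_A \coloneqq \gen{a^A}$ and $N_B \coloneqq \gen{b^B}$. The first observation is that $(a,b) \in N_A\times N_B$ and the latter is a normal subgroup of $A\times B$, so $K \subseteq N_A\times N_B$. It follows that the preimage of the kernel of $\pi$ in $A\times B$ is exactly $N_A\times N_B$, and hence
\[
\ker\pi = (N_A\times N_B)/K.
\]

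The key computation is to show that modulo $K$, every conjugate of $a$ in $A$ becomes identified with $a$ in the first coordinate (and symmetrically for $b$). Using the convention $x^y = y^{-1}xy$, for any $\alpha\in A$ we have
\[
(a^\alpha a^{-1},\, 1) \;=\; (a,b)^{(\alpha,1)}\,(a,b)^{-1} \;\in\; K,
\]
so $\overline{(a^\alpha,1)} = \overline{(a,1)}$ in $(A\times B)/K$. Since $N_A\times 1$ is generated by the elements $(a^\alpha,1)$, its image in $(A\times B)/K$ is the cyclic subgroup generated by $\overline{(a,1)}$. The symmetric argument with $\beta\in B$ shows that the image of $1\times N_B$ is cyclic, generated by $\overline{(1,b)}$.

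Finally, because $(a,b)\in K$ we have $\overline{(1,b)} = \overline{(a,1)}^{-1}$, so the images of $N_A\times 1$ and of $1\times N_B$ both lie in the single cyclic subgroup $\gen{\overline{(a,1)}}$. Therefore $\ker\pi = \gen{\overline{(a,1)}}$ is cyclic, and in particular abelian of rank at most $1$. I do not anticipate any real obstacle: the whole proof rests on the one identity $(a^\alpha a^{-1},1) = (a,b)^{(\alpha,1)}(a,b)^{-1}$ together with its mirror image in the second coordinate, after which the structure of $\ker\pi$ is forced.
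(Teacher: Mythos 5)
Your proof is correct and follows essentially the same route as the paper: both identify $\ker\pi$ with $(\gen{a^A}\times\gen{b^B})/\normgen{(a,b)}$ and both rest on the single identity $(a,b)^{(\alpha,1)}(a,b)^{-1}=(a^{\alpha}a^{-1},1)$ together with its mirror image in $B$. The only difference is your closing observation that $(a,b)\in\normgen{(a,b)}$ forces $\overline{(1,b)}=\overline{(a,1)}^{-1}$, which sharpens the paper's conclusion from ``abelian of rank at most $2$'' to ``cyclic''; this is a valid (if minor) improvement and changes nothing downstream.
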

\begin{proof}

Observe that $\gen{(a,b)^{A\times B}}$ is a normal subgroup of $\gen{a^A}\times\gen{b^B}$.  Therefore, the group $G=A\times B/\normgen{(a,b)}$ fits into the following exact sequence.
\begin{equation}\label{eq:extWP}1 \rightarrow \frac{\gen{a^A} \times \gen{b^B}}{\gen{(a,b)^{A\times B}}} \rightarrow G \stackrel{\pi}{\rightarrow} \frac{A}{\gen{a^A}} \times \frac{B}{\gen{b^B}} \rightarrow 1\end{equation}
where $\pi$ is the natural quotient map.

Let $a_1\in A$; then $(a,b)^{(a_1,1)}=(a^{a_1},b)$ and hence,  
$$(a^{a_1},b)(a,b)^{-1}=(a_1^{-1}aa_1a^{-1},1)=([{a_1}^{-1},a],1)\in \gen{(a,b)^{A\times B}}.$$
Therefore, $([A,a],1)\subseteq \gen{(a,b)^{A\times B}}$  and similarly, $(1,[B,b])\subseteq \gen{(a,b)^{A\times B}}.$ There is a natural epimorphism $\phi$,  $$\frac{\gen{a^A}}{\gen{[a,A]}}\times\frac{\gen{b^B}}{\gen{[b,B]}}\stackrel{\phi}{\to} \frac{\gen{a^A} \times \gen{b^B}}{\gen{(a,b)^{A\times B}}} =\ker (\pi).$$ 

Note that $\gen{a^A}/\gen{[a,A]}$ and $\gen{b^B}/\gen{[b,B]}$ are cyclic and hence, $\ker (\pi)$ is abelian of  rank at most 2. 

%

\end{proof}

\begin{ThmD} Let $A$ and $B$ be two recursively presented groups and $a\in A$ and $b\in B$ such that the word problem is solvable in $A$, $A/\normgen{a}$, $B$ and $B/\normgen{b}$. Then the word problem is solvable in  $G=(A \times B)/\normgen{(a,b)}$. 
\end{ThmD} 
\begin{proof}
By Lemma \ref{Lem:exten}, we conclude that the recursively presented group $G$ is an extension of two groups in each of which, the word problem is solvable. Therefore, $G$ has solvable word problem.

\end{proof}

\section{Proof of  Theorems A and B}\label{sec:ThmB}

In order to prove Theorem B we need the following observation.
\begin{Lem}\label{lem:polyfg}
Any quotient of a polycyclic group is polycyclic. 
In particular, any quotient of a polycyclic group has solvable word problem.
\end{Lem}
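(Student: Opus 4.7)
The plan is to reduce the lemma to the standard structural property of polycyclic groups, namely that they admit a finite subnormal series with cyclic factors, and then verify that this property is preserved under quotients.

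First, I would fix a polycyclic group $P$ and a normal subgroup $N \triangleleft P$. By definition of polycyclic, there is a subnormal series
\begin{equation*}
1 = P_0 \triangleleft P_1 \triangleleft \cdots \triangleleft P_n = P
\end{equation*}
in which every factor $P_{i+1}/P_i$ is cyclic. Setting $Q_i \coloneqq P_i N / N$, I would show that
\begin{equation*}
1 = Q_0 \triangleleft Q_1 \triangleleft \cdots \triangleleft Q_n = P/N
\end{equation*}
is a subnormal series in $P/N$ whose successive quotients are cyclic. The normality $Q_i \triangleleft Q_{i+1}$ follows from $P_i \triangleleft P_{i+1}$, and the second isomorphism theorem yields an epimorphism $P_{i+1}/P_i \twoheadrightarrow Q_{i+1}/Q_i$, so each factor is a cyclic quotient of a cyclic group, hence cyclic. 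This proves that $P/N$ is polycyclic.

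For the second assertion, I would appeal to the well-known fact that every polycyclic group is finitely presented and has solvable word problem. One standard route is to observe that polycyclic groups are residually finite (a theorem of Hirsch) and finitely presented, and hence satisfy McKinsey's criterion for solvability of the word problem; alternatively, one may cite the theorem of Auslander that polycyclic groups are linear over $\mathbb{Z}$ (or simply work inductively along the cyclic series, using that extensions of groups with solvable word problem by cyclic groups inherit the property, provided the extension is effectively described — which is the case here since polycyclic groups are finitely presented).

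There is no real obstacle in this lemma; the only mild subtlety is that ``having solvable word problem'' requires a finite presentation, so one must invoke (or recall) the standard fact that polycyclic groups are finitely presented. Once this is granted, both assertions are immediate from the subnormal series argument above.
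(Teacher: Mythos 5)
Your proof is correct. The paper states this lemma without proof, treating it as a standard observation, and your argument (pushing the cyclic subnormal series to the quotient, then invoking finite presentability plus residual finiteness, or induction along the series, for the word problem) is exactly the standard justification one would supply.
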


\begin{ThmB} Let $\Gamma$ be a starred graph and $\mathfrak{G}=\{G_v\mid v\in V\ga\}$ be a family of poly-(infinite cyclic) groups.  Let $g\in G\coloneqq \ga \mathfrak{G}$. Then,  the word problem  of the  one-relator quotient $G/{\normgen{g}}$ is solvable.
\end{ThmB}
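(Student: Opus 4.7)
I would prove Theorem B by strong induction on $|V\Gamma|$. The base case $|V\Gamma|\le 1$ is immediate: $G$ is polycyclic, so $G/\normgen{g}$ is polycyclic by Lemma \ref{lem:polyfg} and hence has solvable word problem. For the inductive step, if $\Gamma$ is connected, Droms' lemma supplies a node $v$, so $\Gamma=\{v\}*\Lambda$ with $\Lambda\coloneqq \Gamma_{V-\{v\}}$ starred (full subgraphs of starred graphs are starred), and consequently $G=G_v\times G_\Lambda$. Writing $g=(a,b)$ with $a\in G_v$, $b\in G_\Lambda$, I would apply Proposition 1: its hypotheses hold because $G_v$ and $G_v/\normgen{a}$ are polycyclic by Lemma \ref{lem:polyfg}, $G_\Lambda$ is algorithmically locally indicable by Proposition \ref{prop:chordal} (so in particular has solvable word problem), and $G_\Lambda/\normgen{b}$ has solvable word problem by the induction hypothesis, since $|V\Lambda|<|V\Gamma|$.

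Suppose now $\Gamma$ is disconnected; pick a connected component $\Gamma_1$, set $\Gamma'\coloneqq\Gamma-\Gamma_1$, and decompose $G=G_{\Gamma_1}*G_{\Gamma'}$. After conjugation I may assume $g$ is cyclically reduced in this free product. If $g$ lies in a factor, say $G_{\Gamma_1}$, then $G/\normgen{g}\cong G_{\Gamma_1}/\normgen{g}*G_{\Gamma'}$, and the first factor has solvable word problem by the induction hypothesis (as $|V\Gamma_1|<|V\Gamma|$) while the second does by Proposition \ref{prop:chordal} iterated with Lemma \ref{lem:free_prod}; the free product of two groups with solvable word problem has solvable word problem by the classical normal-form algorithm. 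Otherwise $g$ is not conjugate to any element of $G_{\Gamma_1}$ or of $G_{\Gamma'}$, and both factors are algorithmically locally indicable (Proposition \ref{prop:chordal}, together with Lemma \ref{lem:free_prod} for $G_{\Gamma'}$), so Theorem C(ii) with $C$ trivial yields that the membership problem for $G_{\Gamma_1}$ in $G/\normgen{g}$ is solvable.

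The remaining step, and the one I expect to require the most care, is to upgrade this solvable membership problem to solvability of the word problem in $G/\normgen{g}$. Given a word $w'$, first decide whether $w'$ represents an element of $G_{\Gamma_1}$ in $G/\normgen{g}$; if not, $w'\ne 1$. If so, enumerate words $h_1,h_2,\dots$ in the generators of $G_{\Gamma_1}$ and run in parallel the semi-deciders that check whether $w'h_k^{-1}\in\normgen{g}$. Since $G_{\Gamma_1}$ embeds in $G/\normgen{g}$ by Theorem C(i), some $h_k$ represents the same element as $w'$, so the procedure terminates, and the solvable word problem in $G_{\Gamma_1}$ then decides whether $h_k=1$. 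The embedding statement in Theorem C(i) is what guarantees termination of this search, and it is this point—translating membership of a subgroup into the word problem of the whole quotient—that is the subtlest in the argument; the rest is a structural induction cleanly enabled by the node decomposition of starred graphs.
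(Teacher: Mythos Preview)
Your proof is correct and follows essentially the same inductive strategy as the paper: induction on $|V\Gamma|$, handling the connected case via a nodal vertex and Proposition~1, and the disconnected case via the free-product splitting together with Theorem~C(ii) (with $C$ trivial) and Proposition~\ref{prop:chordal}.

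The only substantive difference is your final paragraph. The paper simply writes that solvability of the membership problem for $G_{\Gamma_1}$ in $G/\normgen{g}$ gives ``in particular'' solvability of the word problem, without further comment. You are right that this deserves justification: knowing that $w'$ lies in the image of $G_{\Gamma_1}$ does not by itself tell you which element it is. Your enumeration argument (search for $h_k$ with $w'h_k^{-1}\in\normgen{g}$, using recursive enumerability of $\normgen{g}$ in the recursively presented group $G$, termination guaranteed by the embedding from Theorem~C(i), then decide $h_k=1$ in $G_{\Gamma_1}$) is a clean and correct way to fill this in. An alternative, implicit in the paper, is that the proof of Theorem~C(ii) actually proceeds by computing explicit normal forms in a tower of HNN extensions and amalgamated products, so the membership algorithm in fact produces a witness in $G_{\Gamma_1}$; but your self-contained argument avoids having to trace through that.
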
 
\begin{proof}
 We argue by induction on $|V|$. If $|V|=1$ the result holds by Lemma \ref{lem:polyfg}. We suppose $|V|\geq 2$ and that the result holds for starred graphs with fewer vertices.

If $\ga$ is connected, then there exists a nodal vertex $v$, and $G=G_v\times G_{V-\{v\}}$. By induction hypothesis all one relator quotients of $G_{V-\{v\}}$ and $G_v$ have solvable word problem. Also $G_{V-\{v\}}$ and $G_v$ are finitely presented. Therefore, by Proposition 1, $G/\normgen{g}$ has solvable word problem.

If $\ga$ is disconnected, then $\ga$ is the union of two disjoint components $\ga_1$ and $\ga_2$. In this case, $G=G_{\ga_1}*G_{\ga_2}.$ If $g$ is conjugate to an element of $G_{\ga_1}$ then, $G/\normgen{g}=G_{\ga_1}/\normgen{g}*G_{\ga_2}$. By induction, $G_{\ga_1}/\normgen{g}$ has solvable word problem and so, $G$, being the free product of two groups with solvable word problem, also has solvable word problem. 

The remaining case is when $g$ is not conjugate to an element of $G_{\ga_1}$ nor of $G_{\ga_2}$. But by Proposition \ref{prop:chordal}, $G_{\ga_1}$ and $G_{\ga_2}$ are algorithmically locally indicable. Therefore, we can apply the special case of Theorem C (ii) in which the amalgamated subgroup is trivial to conclude that $G_{\ga_1}$ has solvable membership problem in $G/\normgen{g}$; in particular, the word problem for $G/\normgen{g}$ is solvable.
\end{proof}

\begin{ThmA} Let $A_\Gamma$ be a starred right-angled Artin group and let $g$ be an element of $A_\Gamma$. Let $N$ be the set of nodal vertices of $V$ and $G\coloneqq A_\Gamma/{\normgen{g}}$. Then
\begin{enumerate}
\item[(i)] the word problem is solvable in $G$;
\item[(ii)] if $U\subset N$ and $g\not\in A_{\ga_U}$, then $A_{\ga_U}$ naturally embeds in $G$;
\item[(iii)] if $U$ spans a sub-star, and $g$ is not conjugate to an element of $A_{\ga_U}$ then $A_{\ga_U}$ naturally embeds in $G$.
\end{enumerate}
\end{ThmA} 
\begin{proof}
Assertion (i) follows from Theorem B. To prove (ii), we argue by induction on the number of vertices of $\ga$. Recall that $N$ is the set of nodal vertices of $\ga$ and $U$ is a subset of $N$ such that $g\notin A_{\ga_U}$. If $\ga$ is disconnected then $N$ is empty and the result holds. So we will suppose that $\ga$ is connected.

Let $g\in A_{\ga_N}$. As $A_\ga=A_{\ga_{V-N}}\times A_{\ga_N}$, the group $A_\ga/\normgen{g}$ decomposes as a direct product of $A_{\ga_{V-N}}$ and $A_{\ga_N}/\normgen{g}$. We know that $\ga_{N}$ is a complete graph and $A_{\ga_N}$ is abelian. If $g\notin A_{\ga_U}$,  then $\gen{g}\cap A_{\ga_U}=\{1\}$ and $A_{\ga_U}$ naturally embeds in $A_{\ga}/\normgen{g}$.

Suppose that $g\notin A_{\ga_N}$. Now $\ga_{V-N}$ is disconnected; let $K_1$ and $K_2$ be non-empty graphs that form a partition of $\ga_{V-N}$. If $g$ is conjugate to an element of $A_{K_1}\times A_{\ga_N}$, then by induction hypothesis, $A_{\ga_N}$ naturally embeds in $(A_{K_1}\times A_{\ga_N})/\normgen{g}$. Therefore, we can form the amalgamated free product $(A_{K_1}\times A_{\ga_N})/\normgen{g} *_{A_{\ga_N}}(A_{K_2}\times A_{\ga_N})$ and $A_{\ga_U}$ naturally embeds in it.
It remains to consider the case when $g$ is not conjugate to an element of either $A_{K_1}\times A_{\ga_N}$ or $A_{K_1}\times A_{\ga_N}$. Here, we invoke Theorem C(i) to conclude that $A_{\ga_N}$ naturally embeds in $A_\ga/\normgen{g}$ and therefore $A_{\ga_U}$ embeds in $G$ as required.

For (iii), let $K_1,\dots, K_n$ be the connected components of $\ga-N$. Then $U$ spans a sub-star of $\ga$ if $U=N\cup (\cup_{i\in I}VK_i)$, where $I$ is a proper subset of $\{1,\dots,n\}$. By hypothesis $g$ is not conjugate to an element of $A_{\ga_U}$. The case $I$ is empty was considered in (ii). So we assume that $I$ is non-empty. Let $J=\{1,\dots, n\}-I$ and $U^c=N\cup(\cup_{j\in J} VK_j)$. If $g$ is not conjugate to an element of $A_{\ga_{U^c}}$, then the result follows from Theorem C(i), since $G=(A_{\Gamma_{U-N}}\times A_{\Gamma_{N}})*_{A_{\Gamma_{N}}}(A_{\Gamma_{U^{c}-N}}\times A_{\Gamma_{N}})$.

Finally, suppose that $g$ is conjugate to an element of $A_{\ga_{U^c}}$. Since $g\notin A_{\ga_N}$, (ii) implies that $A_{\ga_N}$ naturally embeds in $A_{\ga_{U^c}}/\normgen{g}.$ Then $A_\ga/\normgen{g}\cong A_{\ga_{U^c}}/\normgen{g}*_{A_{\ga_U}}A_{\ga_U}$ and hence $A_{\ga_U}$ naturally embeds in $A_\ga/\normgen{g}$.
\end{proof}

\begin{Ex}
In this example we show that Theorem A can not be generalised to graph products of poly-(infinite-cyclic) groups.

For $i=1,2,3$, let $H_i$ be the poly-(infinite-cyclic) group $\gp{x_i,y_i}{x_iy_ix_i^{-1}=y_i^{-1}}$. Consider $(y_1,y_2)\in H_1\times H_2$. It is easy to see that $(y_1^2,1)$ and $(1,y_2^2)$ belong to $\gen{(y_1,y_2)^{H_1\times H_2}}$. However neither $H_1$ nor $H_2$ naturally embed in $(H_1\times H_2)/\normgen{(y_1,y_2)}$.

Similarly, let $\ga$ be the graph consisting of a line of length 2. We denote the vertices of $\ga$ by $\{1,2,3\}$, where $2$ represents the central vertex. Let $G$ be the graph product of the $H_i$'s over $\ga$ and set $g=y_1y_2$. The previous argument shows that $H_2$ does not embed in $G/\normgen{g}$ under the natural map. Using normal forms, we can see that $g$ is not conjugate to an element of $G_{\{2,3\}}=H_2 \times H_3$ but the natural map from $G_{\{2,3\}}$ to $G/\normgen{g}$ is not injective, since $H_2$ does not embed in $G/\normgen{g}$.
\end{Ex}

\medskip

\noindent{\textbf{{Acknowledgments}}} The authors wish to thank Ashot Minasyan for carefully reading the manuscript and for helpful discussions.

\end{document}